\newcommand{\executeiffilenewer}[3]{%
\ifnum\pdfstrcmp{\pdffilemoddate{#1}}%
{\pdffilemoddate{#2}}>0%
{\immediate\write18{#3}}\fi%
}
\newcommand{%
\executeiffilenewer{.svg}{.pdf}%
{inkscape -z -D --file=.svg %
--export-pdf=.pdf --export-latex}%
\input{.pdf_tex}%
}[1]{%
\executeiffilenewer{#1.svg}{#1.pdf}%
{inkscape -z -D --file=#1.svg %
--export-pdf=#1.pdf --export-latex}%
\input{#1.pdf_tex}%
}
\author{\firstname{Lionel} \middlename{F.} \lastname{Alberti}}
\address{McGill University,\\
Montreal, Quebec (Canada)}
\email{research@lionel.alberti.name}
\keywords{multiplicity, analytic germ, Betti number, Thom-Mather, topological triviality, Thom-Milnor, Lipschitz-Killing, Vitushkin}
\title{Polynomial Bound on the Local Betti Numbers of a Real Analytic Germ}
\begin{document}

\begin{abstract}

This article proves the existence of a bound on the sum of local Betti numbers of a real analytic germ by a polynomial function of its multiplicity. This result can be interpreted as a localization of the classical Oleinik-Petrovsky bound (also known as Thom-Milnor bound) on the sum of Betti numbers of a semi-algebraic set (see \cite{MR0161339}, \cite{Coste:SAGeom}
, thm 4.7, and also \cite{MR0200942, MR949442,MR1090179,MR1070358}). 

The proof relies on an interplay between geometric and algebraic arguments whose key elements are the tangent cone of the germ, the Thom-Mather topological trivialization theorem (\cite{stratif:Mather}, \cite{MR932724}), the Oleinik-Petrovsky bound, and a result by D. Mumford and J. Heintz (\cite{MR716823},  \cite{MR0282975}) bounding the degrees of the generators of an ideal by a polynomial function of the geometric degree of its associated variety.

Our result is then applied to yield bounds on invariants from singularity theory, such as the Lipschitz-Killing curvature invariants and the Vitushkin variations (which include the local density of a germ).

\end{abstract}

\maketitle
Experience from the complex case shows that multiplicity is key to controlling the local Betti numbers. In the real case, there are also results localizing the Oleinik-Petrovsky bound. In \cite{MR923549}, F. Loeser presents bounds in terms of the monodromy and homological invariants. In \cite{MR0247134}, R. N. Draper carries out a very thorough investigation of multiplicity from another angle, and characterizes it in several different manners, such as the Lelong number (see 
 theorem 7.3), and with a similar approach as ours, as the geometric degree of the tangent cone to the germ by looking at the local ring of the germ  (see 
 theorem 6.5). It is thus natural to think that the multiplicity could be used to bound the sum of local Betti numbers in the real case too. However, we show it is in fact not always possible to do so.

We first give a set of sufficient conditions on the dimension of the singular locus of the tangent cone for a polynomial bound in the multiplicity to exist (theorem  \ref{thm:main}). The proof is carried out in section \ref{sect:proof_main_thm} by applying the Thom-Mather topological trivialization theorem  (\cite{stratif:Mather}, \cite{MR932724}) to relate the topology of the germ to that of its tangent cone, followed by the Mumford-Heintz  theorem (\cite{MR716823},  \cite{MR0282975}) and the Oleinik-Petrovsky bound to control the sum of local Betti numbers in the tangent cone and thus the original germ. 

We then show that these conditions are actually optimal (theorem  \ref{thm:opti_main}) in section \ref{sect:coun_exam}  by exhibiting families of germs with constant multiplicity but arbitrary large $0^{th}$ Betti number whenever the conditions of theorem \ref{thm:main} are not met.

Finally in section \ref{sect:appl}, we apply our bound to interesting geometric invariants from singularity theory such as the local density of the germ (see \cite{MR1679984}
,
\cite{equ_sing_reelle_2}
), and with greater generality, the Lipschitz-Killing curvature invariants (\cite{MR1936341,equ_sing_reelle_2,MR2041428}, and definition \ref{defin:Lip-Kil_var} here) and the Vitushkin variations (\cite{MR0476953,MR2041428}, and definition \ref{def:Vitushkin_var} here). These quantities play an important role in defining notions of equisingularity for real varieties (\cite{MR1832990,equ_sing_reelle_2}). The local number of connected components in a generic affine section is also relevant to localizing results on entropy by means of the Vitushkin variations (see \cite{MR2041428}
, theorem 3.5). 
These quantities are linked to the local Betti numbers by an integral known as the local multidimensional Cauchy-Crofton formula (\cite{equ_sing_reelle_2} 
for the multidimensional case, and \cite{MR1679984}, 
 \cite{MR1832990} 
 for the original result about the density only). It inherits its name from the classical Cauchy-Crofton formula (see \cite{MR0022594}
 or \cite{MR0257325}
) which is a global quantity. The sum of Betti numbers bounds both the number of connected components which serves to define the Vitushkin variations and the Euler characteristic which serves to define the Lipschitz-Killing curvature invariants (the density being both a Vitushkin variation and a Lipschitz-Killing invariant). By the local multidimensional Cauchy-Crofton formula it is thus possible to derive sharp local bounds in terms of the multiplicity for the density, the Lipschitz-Killing invariants, and the Vitushkin variations.

\section{Statement of Main Results}\label{sect:main_thm}

\subsection{Local Betti Numbers}
We set the following convention:
\begin{itemize}
\item We will use the letter $X$ for real varieties,  $Z$ for complex varieties, and $Y$ for varieties that can be either real or complex.
\item The symbol $\Grass_{k,n}$ will denote the Grassmanian, and $\AffGrass_{k,n}$ the affine Grassmanian, which we will parametrize by $D\in\Grass_{k,n}$ and $t\in\orthog{D}/\{0\}$.
\item A property will be generically true iff it is true outside a set of measure zero (algebraically generic properties are thus \emph{a fortiori} generic).
\end{itemize}
\begin{defi}
Let $S\subset\R^n$ be a triangulable set. 
The $i^{th}$ Betti number $\Betti_i(S)$ of $S$ is the rank of the $i^{th}$ homology group of $S$.
\end{defi}
\begin{prop}[Local Betti Numbers]\label{def:numb_conn_comp}\index{Local Betti number}\nomenclature{$\Betti_i$}{Local Betti number}
Let $X\subset\R^n$ be an analytic germ at $0$, let $D\in\Grass_{k,n}$ and $t\in\orthog{D}/\{0\}$.\\
Define the local directional Betti number $\Betti_i(X,D,t)$  as
$$\Betti_i(X,D,t)=\lim_{\epsilon\to 0} \lim_{\lambda\to 0} \Betti_i(X\cap (\lambda t+D)\cap B_{0,\epsilon}),$$
where $\epsilon>0$ and $\lambda\in \R_+^*$.\\
Define the local Betti number $\Betti_i(X,k)$ as the infimum of the constants $c$ such that $c\ge \Betti_i(X,D,t)$ generically for $(D+\lambda t)\in \AffGrass_{k,n}$.

Then the limits defining $\Betti_i(X,D,t)$ always exist, and thus $\Betti_i(X,D,T)$ and $\Betti_i(X,k)$ are well-defined.
\end{prop}

\begin{proof}
The family $(X\cap (\lambda t+D)\cap B_{0,\epsilon})_{\lambda,\epsilon}$ is subanalytic. We can thus apply Hardt's theorem which shows that there are only finitely many topological types over $\lambda$ for each fixed $\epsilon$. The limit $\lim_{|\lambda|\to 0} \Betti_i(X\cap (\lambda t+D)\cap B_{0,\epsilon})$ thus exists for every $\epsilon$, and the function $\epsilon\mapsto \lim_{|\lambda|\to 0} \Betti_i(X\cap (\lambda t+D)\cap B_{0,\epsilon})$ is well-defined. Since limits can be written as first order logical formulas the graph of this function is also subanalytic, and thus analytic in a neighborhood of $0$. Since the function is discrete (values in $\N$) it is actually constant in this neighborhood, and the limit $\lim_{\epsilon\to 0} \lim_{|\lambda|\to 0} \Betti_i(X\cap (\lambda t+D)\cap B_{0,\epsilon})$ exists.
\end{proof}

\begin{defi}[Local Sum of  Betti Numbers]\nomenclature{$\SBetti$}{Local Betti numbers}
Let the local sum of directional Betti numbers for $D$ and $t$ be $$\SBetti(X,D,t)=\sum_{i=0}^n \Betti_i(X,D,t).$$
Let the local sum of Betti numbers $\SBetti(X,k)$ be the infimum of the constants $c\in\R$ such that $c\ge\SBetti(X,D,t)$ generically for $(D+\lambda t) \in \AffGrass_{k,n}$.
\end{defi}

\begin{rema}
Let $\SBetti'(X,k)=\sum_{i=0}^n \Betti_i(X,k)$, the sum of local Betti numbers. Note that $\SBetti(X,k)\ge\SBetti'(X,k)$ since $\SBetti'(X,k)$ is a sum of infima and is thus less than $\SBetti(X,k)$, the infimum of the sum. This is why we choose to control the local sum of Betti numbers rather than the sum of local Betti numbers.
\end{rema}

\subsection{Algebraic Tangent Cone}
As we mentioned earlier, for a real analytic germ $X$, it is not always possible to bound $\Betti_0(X,k)$ (hence $\SBetti$) in terms of the multiplicity. We introduce the notion of algebraic tangent cone that will serve to formulate the condition under which a bound exists, and define the multiplicity of a germ in relation to it.

In the following, $\K$ is a notation for $\C$ or $\R$. 
\begin{defi}
For any analytic germ at the origin $f\in\K((X_1,\ldots,X_n))$, let $\mu(f)$ denote the lowest total degree of the monomials of $f$, and let the initial part of $f$, ${\mathrm Init}(f)$, be the sum of the monomials of $f$ with degree $\mu(f)$.
\end{defi}

Since $\mu(f)$ is the lowest degree of the monomials of $f$, $\epsilon^{\mu(f)}$ can be factored out from the power series expansion of $f$ at $0$, and thus $f(\epsilon X_1,\ldots, \epsilon X_n)/\epsilon^{\mu(f)}$ is also an analytic germ in $\K((X_1,\ldots,X_n, \epsilon))$ (hence well-defined at $\epsilon=0$). Based on this observation, we can define the conic blow-up of $f$:

\begin{defi}[Conic Blow-up]\label{def:conic_blowup}\index{Conic blow-up}\nomenclature{$\Blow{X}_\epsilon$}{Conic blow-up}\nomenclature{$\Blow{Z}_\epsilon$}{Conic blow-up}
Let $Y\in\K^n$ be an analytic germ at $0$. Let $\J \subset \K((X_1,\ldots,X_n))$ be the ideal of functions vanishing on $Y$ .
We define the conic blow-up ideal as
$$\Blow{J}=\{f(\epsilon X_1,\ldots, \epsilon X^n)/\epsilon^{\mu(f)}\ |\ f\in\J\}\subset \K((X_1,\ldots,X_n,\epsilon)).$$
We call $V(\Blow{J})$ the conic blow-up of $Y$, and we conceive of it as a family of varieties in $\K^n$ over the dimension $\epsilon\in\K$, therefore we write
$$(\Blow{Y}_\epsilon)_{\epsilon\in\K} = V(\Blow{J}) \subset \K^n\times\K,$$
and $\Blow{Y}_\epsilon$ denotes the fiber in $\K^n$ defined by $\Blow{J}$ with $\epsilon$ fixed.
\end{defi}
One can note that $\Blow{Y}_0=V(\{{\mathrm Init}(f)\ |\ f\in\J\})$, and $\Blow{Y}_1=Y$. We will also use the fact that for all non-zero $\epsilon$ the $\Blow{Y}_\epsilon$'s are homothetic to each other.

\begin{defi}[Algebraic tangent cone]\label{def:alg_tang_cone} \index{Tangent cone}\index{Algebraic tangent cone|\\see{\mbox{Tangent cone}}}\nomenclature{$T(Y)$}{Algebraic tangent cone}
Let $Y\in\K^n$ be an analytic germ at $0$. The algebraic tangent cone $T(Y)$ of $Y$ is defined as the scheme $T(Y)=\Blow{Y}_0$.
\end{defi}

\begin{rema}
It is worth noting that although $Y$ is analytic, $T(Y)$ is in fact algebraic because the initial parts of the defining analytic functions are polynomials. The geometric significance of this observation is given by a result by H. Whitney in \cite{MR0188486} (theorem  5.8) which asserts that for a complex germ $Y$, the tangent cone $T(Y)$ is equal to the set of limits of secants of the germ at 0. However, for a real analytic germ, the algebraic tangent cone may contain additional points that do not come from limits of secants.
\end{rema}


We will repeatedly interpret cones as projective varieties and we introduce this definition to remove ambiguity:
\begin{defi}[Cone as Projective Variety]
Let $C\subset\K^n$ be a cone centered at $0$, then $C^\Proj$ denotes its associated interpretation as a subset of $\Proj^{n-1}(\K)$.
\end{defi}
We can now give a definition of the multiplicity based on the tangent cone. This approach stems from the work of R. N. Draper in \cite{MR0247134}.
\begin{prop}\label{prop:multproj}
Let $Z\subset\C^n$ be a complex analytic germ of dimension $d$ at $0$. 
Let $T$ be the algebraic tangent cone to $Z$, and let $T_d$ be the union of the components of $T$ of dimension $d$.\\
Then for all $D\in\Grass_{n-d+1,n}$ such that  $ T^\Proj_d\cap D\subset \Proj^{n-1}(\C)$ is a zero-dimensional variety, the dimension over $\C$ of the function sheaf of $\left(T^\Proj_d\cap D\right)$ is always equal to the same integer $\mu(Z)$.

We call $\mu(Z)$ the multiplicity of the germ $Z$. By extension, for $X\subset\R^n$ a real analytic germ, $\mu(X)$ denotes the multiplicity of the complexification of $X$.
\end{prop}
\begin{proof}
This is a straightforward consequence of the results in \cite{MR0247134}: theorem 6.4 shows that the degree of the projective variety $T^\Proj_d$ is equal to the degree of $Z$ at $0$, and then theorem 6.5 shows that the degree of $Z$ at $0$ is the multiplicity of the germ as defined classically by means of the Hilbert polynomial.
\end{proof}

At this point we emphasize the use of the word scheme in the definition of the tangent cone: the tangent cone is not necessarily defined by a radical ideal. The properties of this potentially non-radical ideal are key to proving our results. For instance, for $Z=V(x^2-y^3)$ in $\C^2$, $T(Z)=x^2$, and the multiplicity according to the previous definition is $2$, not $1$.

With this distinction in mind, let us  define the singular locus of the algebraic tangent cone:
\begin{defi}[Singularity of tangent cone]\index{Tangent cone!Singularity}\nomenclature{$\sing{T}$}{Singularity of tangent cone}
Let $T$ be the tangent cone for an analytic variety. We define $\sing{T}$ as the singular locus of $T$: the points $p$ where the local function sheaf at $p$ is not regular.
\end{defi}
Note that the previous definition is applicable whether $T$ is considered a cone in $\K^n$ or a variety in $\Proj(\K)^{n-1}$ and yields the same result. Also, because regularity of the local ring can be expressed as a condition on the rank of the Jacobian matrix, $\sing{T}$ itself is an algebraic variety.

\begin{rema}\label{rem:Grobner_multiplicity}
Effective calculation of the tangent cone for algebraic germs can be carried out using Gr\"obner bases (also known as standard bases). If $g_1,\ldots,g_l$ is a set of generators of $\I$ such that $(\mathrm{Init}(g_1),\ldots,\mathrm{Init}(g_l)) = \mathrm{Init}(\I)$, it is called a Gr\"obner basis of $\I$. Therefore the initial parts of the generators of the Gr\"obner basis are generators of the algebraic tangent cone. One of the most efficient algorithms to compute Gr\"obner bases is Faug\`ere's $F_5$ algorithm \cite{MR2035234}.
\end{rema}
\subsection{Main Theorem and Optimality Thereof}


We now  state our main result about the local sum of Betti numbers in a generic affine section of a real analytic germ (theorem  \ref{thm:main}) and the associated optimality theorem \ref{thm:opti_main}.
\begin{theorem}[Main Theorem]\label{thm:main}
Let $X\subset\R^n$ be a real analytic germ at $0$ of dimension $d$. Let $T$ be the algebraic tangent cone to $X$, and let $s=\dim\left(\sing{T}\right)$.
Then for any $k\in\N$ such that $2\le k \le n-1$
\begin{enumerate}
\item if $k<n-d$, for $A$ generic in $\AffGrass_{k,n}$, $A$ avoids $X$, and thus $\SBetti(X,k)=0$.
\item if $k=n-d$, we have the inequality $\Betti_0(X,k)\le\mu(X)$.\\
 Since $k=n-d$, $\forall i>0,\ \Betti_i(X,k)=0$ and the previous inequality can be rewritten $\SBetti(X,k)\le\mu(X)$.
\item if $n-d < k < n-s$ and the complexification of $X$ is pure dimensional, then 
$$\SBetti(X,k)\le \mu(X)(2\mu(X)-1)^{k-1}.$$
\end{enumerate}
\end{theorem}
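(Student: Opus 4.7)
The plan is to dispose of parts (1) and (2) by direct geometric and algebraic arguments and to attack part (3) by interpolating between $X$ and $T(X)$ through the conic blow-up.

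For part (1), since the germ $X$ has dimension $d$, a transversality / dimension-count argument shows that for $k<n-d$, a generic element $A=D+\lambda t$ of $\AffGrass_{k,n}$ lying close to $0$ fails to meet $X$ altogether; indeed, the ``incidence'' subanalytic set $\{(A,x)\in\AffGrass_{k,n}\times X\mid x\in A\}$ projects to a subset of $\AffGrass_{k,n}$ of codimension $\ge 1$, hence of measure zero. For part (2), we exploit Proposition \ref{prop:multproj}: when $k=n-d$, the section $T_d\cap D$ is zero-dimensional and carries length $\mu(X)$ as a complex scheme. For generic $D$, the same holds for the complexification of $X$ near $0$, and the real count $\Betti_0(X,D,t)$ is bounded above by this complex length; all higher $\Betti_i(X,D,t)$ vanish because the section is zero-dimensional.

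For part (3), I would use the family $\bigl(\widetilde{X}_\epsilon\bigr)_{\epsilon\in\R}$ from Definition \ref{def:conic_blowup}, viewed as a real subanalytic family over $\epsilon$, with $\widetilde{X}_1=X$ and $\widetilde{X}_0=T(X)$. Fix a generic $A=D+\lambda t\in\AffGrass_{k,n}$ and a small ball $B_{0,\epsilon}$. The idea is to compare $X\cap(\lambda t+D)\cap B_{0,\epsilon}$ with $T(X)\cap(\lambda t+D)\cap B_{0,\epsilon}$. Because the $\widetilde{X}_\epsilon$ are all homothetic for $\epsilon\ne 0$, the topological type of the $\epsilon$--section of $\widetilde{X}_\epsilon$ in the ball $B_{0,\epsilon}$ (after rescaling) is constant as long as no topological degeneration occurs. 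The key is therefore to apply the Thom--Mather topological trivialization theorem along $\epsilon\in[0,1]$ to a Whitney stratification of the family $\bigcup_\epsilon \widetilde{X}_\epsilon \cap A \cap B_{0,\epsilon}$. For $A$ generic with $k<n-s$, $A$ misses $\sing{T}$ projectively, so the stratum of top dimension in $T(X)\cap A$ is non-singular; the condition $n-d<k$ together with pure dimensionality of the complexification ensures that the tangent cone has the expected dimension and that no component ``disappears in the limit.'' This lets us conclude that $\Betti_i(X,D,t) = \Betti_i(T(X),D,t)$ for $i\ge 0$.

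It therefore suffices to bound $\SBetti(T(X),k)$. Here I would invoke the Mumford--Heintz theorem on the ideal of $T(X)$ (algebraic and homogeneous by the remark following Definition \ref{def:alg_tang_cone}): its geometric degree equals the degree of the projective variety $T^{\Proj}$, which by Proposition \ref{prop:multproj} is $\mu(X)$. Hence $T(X)$ is cut out by polynomials of degree $\le\mu(X)$, and the section by $A$ is a real algebraic set in $\R^k$ defined by equations of degree $\le\mu(X)$. The Oleinik--Petrovsky / Thom--Milnor bound on this section gives
\[
\SBetti(T(X)\cap A)\le \mu(X)\bigl(2\mu(X)-1\bigr)^{k-1},
\]
yielding the claimed estimate after passing to the $\epsilon\to 0$, $\lambda\to 0$ limits defining $\Betti_i(X,D,t)$.

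The main technical obstacle is rigorously setting up the Thom--Mather argument: one must construct a Whitney stratification of the total space of the conic blow-up compatible with the affine section $A$ and the ball, and verify the generic transversality / controlled vector field conditions needed for the trivialization to produce a stratified homeomorphism between $(X\cap A\cap B_{0,\epsilon})$ and $(T(X)\cap A\cap B_{0,\epsilon})$. The hypotheses $k<n-s$ and pure-dimensionality of the complexification are exactly what guarantee that the generic affine slice stays in the regular stratum of $T$ and that no component of $X$ collapses under the blow-up, so that the topological type is preserved.
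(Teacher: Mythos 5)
Your overall strategy for part (3) matches the paper's — use the conic blow-up and Thom--Mather to transfer the computation from $X$ to its tangent cone $T$, then control the degree of defining equations via Heintz--Mumford and finish with Oleinik--Petrovsky — but there is a genuine gap in how you apply the Heintz--Mumford theorem. You invoke it directly on $T(X)$ (or rather $T_{\mathbb C}$), stating that its geometric degree is $\mu(X)$ and deducing generators of degree $\le\mu(X)$. But Theorem~\ref{thm:mum_geom_alge_deg} requires the variety to be \emph{pure dimensional}, and even when $\Complex{X}$ is pure dimensional its tangent cone $T_{\mathbb C}$ may still have components of lower dimension (the paper explicitly allows $T_{<d}\ne\emptyset$ and only asserts $T_{<d}\subset\sing T$). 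Moreover Proposition~\ref{prop:multproj} identifies $\mu$ with the degree of $T_d^{\Proj}$, the union of \emph{top-dimensional} components, not of $T^{\Proj}$ itself, so your degree computation is also not justified as stated. The paper sidesteps both issues by applying Heintz--Mumford not to $T$ but to the \emph{generic affine section} $S=(t_{\mathbb C}+D_{\mathbb C})\cap T_{\mathbb C}$: pure dimensionality of $S$ is deduced from transversality together with the inequality $k+s<n$ (which forces a generic section to avoid $\sing T\supset T_{<d}$), and its geometric degree is shown to be exactly $\mu$ by a further application of Lemma~\ref{lem:multtancone}. To repair your argument you would either need to prove that $T_{\mathbb C}$ is pure dimensional under the stated hypotheses (which is not given in the paper and is delicate), or — much better — replace $T$ by $T_d$ or by the generic section $S$ before invoking Heintz--Mumford, as the paper does.

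A second, softer issue is in the Thom--Mather step. You propose applying the moving-the-wall theorem to the family $\bigcup_\epsilon \widetilde{X}_\epsilon\cap A\cap B_{0,\epsilon}$, where the ball radius shrinks with the blow-up parameter. After the homothety by $1/\epsilon$ that turns $\widetilde{X}_\epsilon\cap B_{0,\epsilon}$ into $\widetilde{X}_\epsilon\cap B_{0,1}$, the affine slice $\lambda t+D$ becomes $(\lambda/\epsilon)t+D$, which does not converge as $\epsilon\to 0$; so this family does not have a well-behaved special fiber. The paper instead constructs the homeomorphism in four explicit stages ($h_1,\dots,h_4$ in Theorem~\ref{thm:tang_germ_topo_idem}), first passing to the hemisphere $H_0$ (to exploit the conical structure), then shrinking the spherical cap, flattening it onto the planar disk $D_\lambda$, and only at the last step perturbing $\epsilon$ away from $0$ in the unit ball. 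Each stage is a separate application of moving-the-wall with its own transversality hypothesis, and those hypotheses are exactly conditions (1)--(3) of Theorem~\ref{thm:tang_germ_topo_idem}, met generically because $k+s<n$. Your sketch identifies the right hypotheses but does not set up a family to which the trivialization theorem can legitimately be applied. Parts (1) and (2) of your proposal are fine, though for part (2) the paper takes extra care (via Lemma~\ref{lem:multtancone} and Corollary~\ref{cor:zero_dim_bound}) to verify that the multiplicity is attained on a \emph{Zariski-open} set of affine spaces, since real affine spaces are measure zero among complex ones and a merely measure-theoretic genericity over $\mathbb C$ would not descend to $\mathbb R$.
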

\begin{rema}
Point (1) in theorem \ref{thm:main} can be seen as a generalization of lemma 1.4 in \cite{MR1607651}. This lemma shows that  $\Betti_0(X,k)\le\mu(X)$ for hypersurfaces and the main theorem generalizes it to the case of an arbitrary germ $X$.
\end{rema}
The following proposition shows that pure dimensionality of the complexification of a real germ is strictly weaker than pure dimensionality of the real germ itself:
\begin{prop}
Let $X\subset\R^n$ be a real analytic germ at $0$.
\begin{enumerate}
\item If $X$ is pure dimensional of dimension $d$, then its complexification $\Complex{X}$ is also pure dimensional of dimension $d$.
\item The real germ defined by $f(x,y,z)=x^4+4x^3 z+4x^2 z^2-y^2 z^2$ is not pure dimensional but its complexification is.
\end{enumerate}
\end{prop}
\begin{proof}[Proof of (1)]
For any point $p$ and an analytic variety $Y$, we define the vector space of differentials $D(Y,p)=\{ D_p(f)\ |\  f \in V(Y)\}$. Since the defining functions of $\Complex{X}$ and $X$ are the same, at any point $p\in X,\ D(X,p)\otimes \C=D(\Complex{X},p)$ and $\dim_\R \left(D(X,p)\right)=\dim_\C \left(D(\Complex{X},p)\right)$. In particular at any smooth point $p$, the codimension of a variety is the dimension of the vector space of differentials, so the dimensions of $\Complex{X}$ and $X$ are the same at any smooth point.

Let $Z$ be any irreducible component of $\Complex{X}$. Since the complexification is the minimal complex variety containing $X$, $Z$ must intersect $\R^n$. Since $Z \cap \R^n \subset X$ is itself an analytic variety it contains a smooth point $p$ in $X$. By what precedes the dimensions of $Z$ and $X$ are the same.
\end{proof}
\begin{proof}[Proof of (2)]
The graph of $f(x,y,1)$, has two humps: one emerges over the plane $f=0$ and the other one is tangent to it, which creates an oval and a point. However, the complex variety is pure dimensional as $f$ is irreducible (this can be shown elementarily by attempting a factorization in $y$, since $\deg_y f$ is only 2).
\end{proof}

The following theorem precisely states how our main theorem \ref{thm:main} is optimal:
\begin{defi}[Counter-example Family]\label{def:coun_fam}
For any $(n,k,s)\in\N^3$ we say that a family of germs $(X_l)_{l\in\N}$ is a counter-example family of type $(n,k,s)$ if and only if $\forall l,\ X_l \subset \R^n$, $\dim\left(\sing{T(X_l)}\right)=s$, $\mu(X_l)_{l\in\N}$ is bounded by a constant, and \mbox{$\lim_{l\to\infty}\Betti_0(X_l,k)=+\infty$}\\
As a short-hand we will say that a counter-example family is pure dimensional if the \emph{complexifications} of the germs in the family are all pure dimensional.
\end{defi}
Note that the definition imposes a condition on $\Betti_0$ rather than a weaker one on $\SBetti$ only because our counter-examples only require $\Betti_0$.
\begin{theorem}[Optimality of Main Theorem]\label{thm:opti_main}
For any $n,k\in\N$ such that $2\le k \le n-1$,
\begin{enumerate}
\item for any $s\in\N$ such that $s\le n-1$, there exists a counter-example family of type $(n,k,s)$ that is not pure dimensional.
\item for any $s\in\N$ such that $n-k\le s \le n-1$, there exists a pure dimensional counter-example family of type $(n,k,s)$.
\end{enumerate}
\end{theorem}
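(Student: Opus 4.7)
My plan is to construct, for each admissible triple $(n,k,s)$, an explicit family $(X_l)_{l\in\N}$ of analytic germs at $0$ in $\R^n$ witnessing the required blow-up of $\Betti_0(X_l,k)$ under bounded multiplicity and fixed $\dim\sing T(X_l) = s$. The base building block is a Whitney-umbrella-type hypersurface family in $\R^3$: let $X_l = V(f_l)$ with $f_l = x^2 - \prod_{i=1}^l(y - r_i z)$ for distinct nonzero reals $r_1 < \cdots < r_l$. For $l \ge 3$ the initial form is $x^2$, so $\mu(X_l) = 2$ for every $l$ and $T(X_l) = V(x^2)$ has singular locus $V(x)$ of dimension $2 = n-1$. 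For small $\lambda > 0$, the section $y = \lambda$ meets $X_l$ in the curve $\{x^2 = \prod_i(\lambda - r_i z)\}$: a degree-$l$ polynomial in $z$ whose roots $\lambda/r_i$ cluster at $0$ and whose alternating sign pattern carves the real locus into roughly $l/2$ disjoint connected components (closed loops plus boundary tongues). These components persist through the iterated limit $\lim_{\epsilon\to 0}\lim_{\lambda\to 0}$ and under small perturbations of $(D,t) \in \AffGrass_{2,3}$, so $\Betti_0(X_l,2) \to \infty$. Since $f_l = x^2 - P(y,z)$ with $P$ squarefree is irreducible over $\C$, this yields a pure-dimensional counter-example family of type $(3,2,2)$.

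To extend to arbitrary $(n,k,s)$, I vary the ambient dimension and the tangent cone independently. To raise $n$ while preserving the counter-example structure, take the product $X_l \times \R^{n-3} \subset \R^n$ or pull back along a generic linear projection $\R^n \to \R^3$; each construction remains a hypersurface (hence pure-dimensional after complexification), and generic $k$-plane sections for $2 \le k \le n-1$ recover the base family's component count by projection to the essential $(x,y,z)$-coordinates. To vary $s$ in the range $[n-k, n-1]$ required in part~(2), replace the initial form $x^2$ by a nondegenerate quadratic form $q$ in $n-s$ variables of appropriate signature; the singular locus of $V(q)$ then has real dimension exactly $s$, and a sign-changing perturbation of the form $-\prod_i(y - r_i z)$ (adapted to use the remaining variables) continues to generate many components in generic sections. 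For part~(1), where pure-dimensionality is dropped, I would adjoin to the umbrella construction lower-dimensional components (curves or linear subspaces through the origin) whose tangent cones contribute the prescribed singular locus dimension; this naturally breaks pure-dimensionality without destroying the unbounded-$\Betti_0$ behavior.

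The principal obstacle is a systematic verification across all admissible triples that the three conditions hold simultaneously: bounded multiplicity (immediate once the initial form is fixed at degree $2$); the precise value $\dim\sing T(X_l) = s$ (a local Jacobian-ideal computation that must account for the possibly non-reduced scheme structure of the tangent cone, especially delicate when taking unions); and the divergence of $\Betti_0(X_l,k)$ under both the genericity condition on $\AffGrass_{k,n}$ and the iterated limit defining the local Betti number. Each verification is in principle a local algebraic computation, but the case-by-case bookkeeping needed to cover every triple $(n,k,s)$ constitutes the bulk of the argument.
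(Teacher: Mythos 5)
Your core mechanism matches the paper's: fix a low-degree initial form so that the multiplicity and tangent cone are constant in $l$, and use an oscillating higher-degree perturbation to produce unboundedly many components in planar sections. Your base family $f_l=x^2-\prod_{i=1}^l(y-r_iz)$ is a legitimate variant of the paper's $(n,n{-}1,n{-}1)$ series evaluated at $n=3$, and your way of adjusting $s$ by changing the rank of the initial quadratic form is plausible, if sketchy (for $(3,2,1)$ the paper instead uses the degree-$4$ initial form $(x^2+y^2)^2$, which vanishes on a circle and so is easier to break into many ovals than the positive-definite $x^2+y^2$).

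The genuine gap is in propagating to higher $(n,k,s)$. Your two proposed transformations --- ``take the product $X_l\times\R^{n-3}$'' and ``pull back along a generic linear projection $\R^n\to\R^3$'' --- are the \emph{same} operation up to a linear change of coordinates, and this operation keeps the section dimension locked at $k=2$. It does not produce counter-example families with $k\ge 3$. Concretely, let $Y=X_l\times\R\subset\R^4$ and let $A=\lambda t+D$ be a generic affine $3$-plane. The coordinate projection $\pi:\R^4\to\R^3$ restricted to $A$ is, for generic $D$, a bijection onto $\R^3$, so $Y\cap A\cap B_{0,\epsilon}$ is homeomorphic via $\pi$ to $X_l$ intersected with a neighborhood of the origin in $\R^3$. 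For $\lambda$ small this neighborhood contains the vertex $0$, at which all branches of the umbrella germ meet, so the section is connected and $\Betti_0(Y,3)=1$ is bounded; $Y$ is not a counter-example of type $(4,3,s)$. The paper resolves this with two distinct transformations: the product $\mathfrak{P}(X_l)=X_l\times\R$, sending type $(n,k,s)\mapsto(n{+}1,k,s{+}1)$, and the embedding $\mathfrak{E}(X_l)=X_l\times\{0\}$, sending $(n,k,s)\mapsto(n{+}1,k{+}1,s)$. The embedding is what makes $k$ grow: because the germ's dimension is kept fixed while the ambient dimension rises, a generic $(k{+}1)$-plane in $\R^{n+1}$ meets $\{x_{n+1}=0\}$ in a generic $k$-plane, and the section of $\mathfrak{E}(X_l)$ reduces to a section of $X_l$. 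You need a device of this kind; the plain product cannot supply it, and the induction over all admissible $(n,k,s)$ collapses without it.
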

\begin{rema}
The optimality theorem \ref{thm:opti_main} complements theorem \ref{thm:main} exactly since for any \mbox{$s\in\{0,\ldots,n-1\}$}, theorem \ref{thm:main} covers pure dimensional counter-example families for all $k<n-s$, and theorem \ref{thm:opti_main} covers all families that are either not pure dimensional, or pure dimensional with $k\ge n-s$.
\end{rema}

\section{Proof of Main Theorem}\label{sect:proof_main_thm}
We now prove the main theorem \ref{thm:main}. Point (1) of the main theorem is self-evident since generic sections by affine space of dimension less than the codimension of the germ are generically empty. In subsection \ref{sect:0dim_intersec}, we begin by proving point (2), where the dimension of the intersecting affine spaces is equal to the codimension of the germ, by using our careful characterization of multiplicity based on the tangent cone. We then prove point (3), where the dimension of the intersecting affine spaces is greater than the codimension of the germ, by showing that for a generic affine space, the topology of the intersection with the tangent cone is the same as the topology of the intersection with the germ (subsection \ref{sect:aff_section_tan_con_germ_topo_idem}), hence their Betti numbers are the same. The equivalence between both topologies is proved using the Thom-Mather topological trivialization theorem (see \cite{stratif:Mather} or \cite{MR932724}). Then, we use proposition \ref{lem:multtancone} to prove that the multiplicity of the germ is equal to the geometric degree of the sections of the tangent cone. This finally allows us to use the Heintz-Mumford result (theorem \ref{thm:mum_geom_alge_deg}) to bound the degree of the generators of the sections of the tangent cone by the multiplicity, and thereby to bound the sum of the Betti numbers by a polynomial function of the multiplicity using the Oleinik-Petrovsky bound  (theorem \ref{CCupperbound}).

\subsection{Proof of point (2) of the main theorem}\label{sect:0dim_intersec}
It is a basic fact that the multiplicity of a complex germ is equal to the number of points close enough to the origin that lie in the intersection of the germ with a generic affine space whose dimension is the codimension of the germ. Therefore, point (2) of the main theorem \ref{thm:main} could seem straightforward. However, we are considering real germs, and real affine spaces are contained in a set of complex affine spaces of measure zero, so generic equality over $\C$ may not entail generic equality over $\R$. We thus need a slightly more refined characterization of the multiplicity than this to conclude.\\ 
We introduce such a characterization as lemma \ref{lem:multtancone}, which is formulated as geometric conditions on the tangent cone so that we can apply it to prove point (2) of the main theorem as corollary \ref{cor:zero_dim_bound}, as well as to prove point (3) of the main theorem later on in the proof of theorem \ref{thm:main_smooth_case}. 

\begin{defi}
Let $Z\in\C^n$ a Zariski zero-dimensional scheme, then $\dim_\C{\mathcal O}(Z)$ denotes the vector space dimension over $\C$ of $\funsheaf{Z}$, the function sheaf of $Z$. In other words, $\dim_\C{\mathcal O}(Z)$ counts points in $Z$ with multiplicity.\\
 This is the same as the length of $\funsheaf{Z}$ localized at $p$ over the maximal ideal at $p$ , which is the classical definition of module multiplicity algebraically (see \cite{MR0463157}, chapter 7 for instance).
\end{defi}

\begin{lemma}\label{lem:multtancone}
Let $Z\in\C^n$ be a complex germ at $0$ of dimension $d$ and multiplicity $\mu$. 
Let $T=\Blow{Z}_0$ be the tangent cone to $Z$, and let $T_{<d}$ be the union of the components of the tangent cone with dimension less than $d$.
Let $k=n-\dim\left(Z\right)$. Let $D\in\Grass_{k,n}$ and $t\in\orthog{D}/\{0\}$.
Assume $D\cap T=\{0\}$,  $(t+D)\cap T$ is \mbox{$0$-dimensional} and $(t+D)\cap T_{<d}=\emptyset$ in $\C^n$.
Then 
\begin{displaymath}\begin{split}
\exists& \epsilon_0>0,\ \text{s.t. }\forall \epsilon\in]0,\epsilon_0[,\ \exists\lambda_0>0,\ \text{s.t. }\forall \lambda\in]0,\lambda_0[,\\ 
&\dim_\C{\mathcal O}\big( (\lambda t+D)\cap B_{0,\epsilon}\cap Z\big)\ = \dim_\C{\mathcal O}\big( (t+D)\cap T\big) = \mu
\end{split}\end{displaymath}
\end{lemma}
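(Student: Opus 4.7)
The strategy is to use the homothety inherent in the conic blow-up to move from the local intersection near $0$ to a global intersection with the tangent cone $T$. For $\lambda \neq 0$ the scaling $X \mapsto \lambda X$ sends $\Blow{Z}_\lambda = Z/\lambda$ bijectively onto $Z$ and $(t+D) \cap B_{0,\epsilon/\lambda}$ onto $(\lambda t + D) \cap B_{0,\epsilon}$, inducing an isomorphism of $0$-dimensional schemes
\[
  (\lambda t + D) \cap Z \cap B_{0,\epsilon} \;\cong\; (t+D) \cap \Blow{Z}_\lambda \cap B_{0,\epsilon/\lambda}.
\]
Thus the lemma reduces to showing that, for $\lambda$ small enough compared to $\epsilon$, the scheme $(t+D) \cap \Blow{Z}_\lambda$ is entirely contained in $B_{0,\epsilon/\lambda}$ and has length exactly $\mu$.

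I first compute the length at $\lambda=0$. Let $D' = D \oplus \C t \in \Grass_{n-d+1,n}$, so that $D'^\Proj$ has projective dimension $n-d$; since $D \cap T = \{0\}$, the intersection $T^\Proj_d \cap D'^\Proj$ is $0$-dimensional in $\Proj^{n-1}$, and Proposition \ref{prop:multproj} gives that its length over $\C$ is $\mu$. Points of $(t+D) \cap T_d$ correspond bijectively to lines through the origin in $T_d \cap D'$ via $p \mapsto \C p$, and this correspondence preserves the length of the local rings because no line of $T_d \cap D'$ lies inside $D$ (such a line would sit in $D\cap T=\{0\}$). The hypothesis $(t+D)\cap T_{<d}=\emptyset$ lets me replace $T_d$ by $T$, which gives $\dim_\C \mathcal{O}((t+D)\cap T)=\mu$.

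Next I address boundedness and stability of the length. Rewriting $D \cap T = \{0\}$ as $D^\Proj \cap T^\Proj = \emptyset$ in the hyperplane at infinity $H_\infty \subset \Proj^n$, the projective closure $\overline{t+D}$ meets $H_\infty$ in $D^\Proj$, which is disjoint from $T^\Proj$. Using a Gröbner basis of $\mathcal{J}$ (cf.\ Remark \ref{rem:Grobner_multiplicity}) and truncating its generators to sufficiently high polynomial degree, I realize the family $\Blow{Z}_\lambda$ for $\lambda$ in a neighbourhood of $0$ as the fibres of an algebraic family whose projective closures meet $H_\infty$ in a set converging to $T^\Proj$ as $\lambda\to 0$. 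A compactness argument in $\Proj^n$ then produces $R>0$ and $\lambda_1>0$ such that $(t+D)\cap \Blow{Z}_\lambda \subset B_{0,R}$ for all $\lambda \in [0,\lambda_1]$; choosing $\epsilon_0$ and $\lambda_0(\epsilon)$ so that $\epsilon/\lambda>R$ then makes the ball restriction vacuous. On the compact set $(t+D) \cap \overline{B_{0,R}}$ the family is a proper family of $0$-dimensional schemes, so upper semicontinuity of fibre length gives length $\le\mu$ near $\lambda=0$, while conservation of intersection multiplicity for the limit points—which all lie in the scheme $(t+D)\cap T$ of length $\mu$—gives $\ge\mu$. Hence equality holds.

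The main obstacle lies in the boundedness step: $\Blow{Z}_\lambda$ is a priori analytic rather than algebraic, so one must either truncate the defining series to polynomials while preserving both the tangent cone and the local multiplicities of $(t+D)\cap T$, or else work directly with the analytic ideal in a suitable compactification and invoke a Hardt-type triviality. The crucial role of the assumption $D \cap T = \{0\}$ is precisely to prevent intersection points from escaping to infinity as $\lambda \to 0$. Once boundedness is in hand, constancy of the length follows from standard facts about flat families of zero-dimensional schemes over a smooth one-dimensional base.
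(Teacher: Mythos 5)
Your computation of $\dim_\C\mathcal{O}\bigl((t+D)\cap T\bigr)=\mu$ via the correspondence between points of $(t+D)\cap T_d$ and lines of $T_d\cap D'$ (with $D'=D\oplus\C t$), using $(t+D)\cap T_{<d}=\emptyset$ to pass from $T_d$ to $T$, matches the paper's argument exactly. The strategy for the first equality, however, is genuinely different: you invoke flatness / conservation of intersection number in a family of zero-dimensional schemes over the $\lambda$-line, whereas the paper works inside the fixed ball $B_{0,1}$ and tracks branches of $(\C t+D)\cap B_{0,1}\cap\Blow{Z}_\epsilon$ and their transversality to $(\lambda t+D)$. Your route has the virtue of handling scheme-theoretic multiplicities more explicitly, which the paper's branch-counting treats rather informally.

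The real problem, and you flag it yourself, is the boundedness step, and I want to stress that it is a self-inflicted difficulty created by the choice of rescaling. You scale by $\lambda$, sending the ball $B_{0,\epsilon}$ to $B_{0,\epsilon/\lambda}$, whose radius tends to infinity; this forces you to control $(t+D)\cap\Blow{Z}_\lambda$ near the hyperplane at infinity, which requires a projective closure that the analytic family $\Blow{Z}_\lambda$ does not canonically possess. Truncating the power series to polynomials changes the germ, so you would then have to argue separately that the truncation preserves the local intersection lengths, which is essentially the statement you are trying to prove. The paper sidesteps all of this by scaling by the conic blow-up parameter $\epsilon$ instead: it sets $Z'_\epsilon=(\C t+D)\cap B_{0,1}\cap\Blow{Z}_\epsilon$, which is homothetic to $(\C t+D)\cap B_{0,\epsilon}\cap Z$, so the entire analysis takes place in the fixed compact ball $\overline{B_{0,1}}$ and no points ever escape to infinity; one then only needs to let $\lambda\to 0$ with $\epsilon$ already fixed and small. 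I would recommend reorganizing your argument in the same two-parameter fashion: first contract by $\epsilon$ to land in $B_{0,1}$, then apply your conservation-of-number argument to the section by $(\lambda t+D)$ inside that fixed ball, using $D\cap T=\{0\}$ to guarantee that the limiting fibre $D\cap T\cap B_{0,1}$ is the single point $\{0\}$ away from the boundary sphere rather than to control behaviour at infinity. With that change, the rest of your scheme-theoretic argument goes through and actually patches a point the paper leaves somewhat implicit.
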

\begin{proof}
Consider  $Z'_\epsilon=(\C t+D)\cap B_{0,1}\cap \Blow{Z}_\epsilon$. For $\epsilon\neq 0$, it is homothetic to $(\C t+D) \cap B_{0,\epsilon} \cap Z$ because $\epsilon\Blow{Z}_\epsilon=\Blow{Z}_1=Z$, $\epsilon B_{0,1}=B_{0,\epsilon}$ and $(\C t+D)$ is a vector space hence is invariant by homothety. 
Since  $(t+D)\cap T$ is \mbox{$0$-dimensional}, for $\epsilon<\epsilon_0$ small enough $Z'_\epsilon$ consists of points and $1$-dimensional branches only. By possibly reducing $\epsilon_0$ further, we can ensure that $Z'_\epsilon$ only contains $1$-dimensional branches that are all connected to the origin. Therefore, each of these branches will give rise to a point in the projective variety $(\C t+D) \cap T$. Since by hypothesis $D\cap T=\{0\}$, there are no points at infinity, and each branch is thus transverse to the $(\lambda t+D)$ for $\lambda<\lambda_1$ small enough. Therefore, branches of $Z'_\epsilon$ are in one to one correspondence with the points in $ (\lambda t+D)\cap B_{0,1}\cap \Blow{Z}_\epsilon = (\lambda\epsilon t + D)\cap B_{0,\epsilon}\cap Z$, and each branch gives rise to one point in $(t+D)\cap T$ in $\C^n$. Since $(t+D)\cap T_{<d}=\emptyset$ by hypothesis, all the points in $(t+D)\cap T$ come from a branch of $Z'_\epsilon$, and there is also a one to one correspondence between branches of $\Blow{Z}_\epsilon$ and points in $(t+D)\cap T$. This proves the first equality for $\epsilon_0$ and $\lambda_0=\epsilon_0\lambda_1$.

The equality with the multiplicity stems from the fact that $(t+D)$ avoids $T_{<d}$ by hypothesis, so $ \dim_\C{\mathcal O}\big( (t+D)\cap T\big)=\dim_\C{\mathcal O}\big( (t+ D)\cap T_d \big)$, and since $D\cap T=\{0\}$, $\dim_\C \funsheaf{(t+D)\cap T_d}=\dim_\C \funsheaf{(\C t+D)\cap T^\Proj_d}$, which is equal to $\mu$ by proposition \ref{prop:multproj}.
\end{proof}

The previous lemma allows us to see that the multiplicity of the germ can be obtained using a limit of intersecting affine spaces chosen in a Zariski open set of the Grassmannian, and not only chosen in an unspecified generic subset of it. This added structure allows us to prove the first item of the main theorem:
\begin{coro}\label{cor:zero_dim_bound}
Let $X \subset \R^n$ be a real analytic germ at $0$ of dimension $d$. Then $\SBetti(X,n-d)=\Betti_0(X,n-d)\le\mu$.
\end{coro}
\begin{proof}
As the intersection of $X$ and a generic affine space of dimension $(n-d)$ is generically zero-dimensional, $\SBetti(X,n-d)=\Betti_0(X,n-d)$.

The conditions on $D$ and $t$ in lemma \ref{lem:multtancone} are all satisfied on a \emph{Zariski} open set of the affine complex Grassmannian $\AffGrass_{n-d,n}(\C)$. Since the real affine Grassmannian is a subscheme of the complex one, the lemma applies to a Zariski open set of the real affine Grassmannian, which is always generic in the \emph{real} Grassmannian $\AffGrass_{n-d,n}(\R)$. Therefore by lemma \ref{lem:multtancone}, the complexification of a generic real affine section of the germ contains $\mu$ points, and thus this generic real affine section cannot contain more than $\mu$ points. 
\end{proof}

\subsection{Sections of tangent cone and germ have the same topology}\label{sect:aff_section_tan_con_germ_topo_idem}
We know that analytic sets admit Whitney stratifications (\cite{MR0192520}, 
theorem 19.2). In the following it is understood that we stratify the sets we consider with an arbitrary Whitney stratification the first time we encounter them and all the subsequent proofs are carried out with respect to these arbitrary stratifications.

This section shows that we can relate the topology of the tangent cone to that of the germ, and therefore equate their Betti numbers.
\begin{theorem}\label{thm:tang_germ_topo_idem}
Let $X\in\R^n$ be a real analytic germ at $0$ and $\Blow{X}_\epsilon$ its conic \mbox{blow-up} at the origin (def. \ref{def:conic_blowup}).
Let $T$ be the algebraic tangent cone to $X$. For any element $D\in\Grass_{k,n}$ and $t\in\orthog{D}$, assume that
\begin{enumerate}
\item $T$ and $D$, as stratified projective varieties, are transverse (i.e. the sum of their tangent spaces at intersection points is of maximal dimension, that is $n-1$).
\item $T$ and $(\R t+ D)$, as stratified projective varieties, are transverse.
\item The projective variety $(\R t+ D)$ does not intersect $\sing{T^\Proj}$.
\end{enumerate}
Then we have
\begin{displaymath}\begin{split}
\exists\lambda_0>0,&\ \exists \epsilon_0>0,\ \forall\epsilon\in]0,\epsilon_0],\ \forall\lambda\in]0,\lambda_0],\ \exists h\text{ homeomorphism},\\ 
	 &(t+D)\cap \Blow{X}_0 \ \stackrel{h}{\approx}\ (\lambda t+ D)\cap B_{0,1}\cap\Blow{X}_\epsilon
\end{split}\end{displaymath}
\end{theorem}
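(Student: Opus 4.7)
The plan is to apply the Thom-Mather first isotopy theorem to the conic blow-up family $(\Blow{X}_\epsilon)_{\epsilon\in[0,\eta_0]}$ sliced by the affine subspace $(t+D)$, and then recover the stated homeomorphism by exploiting the homothety built into the definition of the blow-up. The three transversality hypotheses~(1)--(3) are precisely what is needed to turn the projection of the sliced family onto the deformation axis into a proper stratified submersion, after which Thom-Mather yields a topological trivialization.

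The first step is a rescaling observation: since $\Blow{X}_\epsilon=(1/\epsilon)X$ for $\epsilon\neq 0$ and $D$ is a linear subspace, one checks directly that
$$
(\lambda t+D)\cap\Blow{X}_\epsilon \;=\; \lambda\cdot\bigl((t+D)\cap\Blow{X}_{\epsilon\lambda}\bigr),
$$
so the two sides of the claimed homeomorphism are related by a homothety of ratio $\lambda$ which also sends $B_{0,1}$ to $B_{0,1/\lambda}$. Setting $\eta=\epsilon\lambda$, the theorem reduces to showing that the one-parameter family $\eta\mapsto(t+D)\cap\Blow{X}_\eta$ has constant topological type for $\eta\in[0,\eta_0]$ and stays inside a fixed ball $B_{0,R}$ uniformly in $\eta$, so that the restriction to $B_{0,1/\lambda}$ becomes inessential once $\lambda$ is small enough.

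Next I would fix a Whitney stratification of the total family $\bigsqcup_\epsilon \Blow{X}_\epsilon\times\{\epsilon\}\subset\R^n\times\R_\epsilon$ compatible with the central fiber $T\times\{0\}$ and with $\sing{T}\times\{0\}$. Conditions~(1)--(3) translate at $\eta=0$ into the statement that the slice $(t+D)\times\{0\}$ is transverse to every stratum of the family that it meets: condition (3) forces avoidance of the singular strata, while (1) and (2) control transversality to the smooth strata at projective infinity and at finite points respectively. Openness of stratified transversality then yields an $\eta_0>0$ such that $(t+D)\times\{\eta\}$ remains transverse to every stratum it meets for all $\eta\in[0,\eta_0]$, so that the subanalytic set
$$
\mathcal F \;=\; \bigsqcup_{\eta\in[0,\eta_0]}\bigl((t+D)\cap\Blow{X}_\eta\bigr)\times\{\eta\}
$$
inherits a Whitney stratification making the projection $\pi\colon\mathcal F\to[0,\eta_0]$ a stratified submersion on each stratum.

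The main obstacle I expect is establishing properness of $\pi$: for $\eta>0$ the blow-up $\Blow{X}_\eta$ is only defined inside the ball $B_{0,r/\eta}$ (with $r$ the radius of convergence of the defining series), so I must bound $(t+D)\cap\Blow{X}_\eta$ inside a fixed compact set uniformly in $\eta\in[0,\eta_0]$. Condition~(1) is what controls the behavior at infinity: any hypothetical sequence $(x_k,\eta_k)\in\mathcal F$ with $|x_k|\to\infty$ would, after passing to a subsequence, produce a projective limit direction $x_k/|x_k|\to\xi$ lying in $T^\Proj\cap D^\Proj$ in a nontransverse fashion, contradicting~(1). Once properness is in place, Thom-Mather's first isotopy theorem provides a stratified trivialization $\mathcal F\cong\pi^{-1}(0)\times[0,\eta_0]$; combining this with the rescaling of the first step — under which, for $\lambda$ sufficiently small, $B_{0,1/\lambda}$ contains the uniform bound $B_{0,R}$ and so drops out — yields the homeomorphism claimed by the theorem.
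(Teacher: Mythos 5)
Your opening rescaling identity
\[
(\lambda t+D)\cap B_{0,1}\cap\Blow{X}_\epsilon \;=\; \lambda\cdot\bigl((t+D)\cap B_{0,1/\lambda}\cap\Blow{X}_{\epsilon\lambda}\bigr)
\]
is correct and is a nice reformulation, and the overall strategy (Thom--Mather / moving-the-wall applied to the blow-up family) is in the same spirit as the paper's. But the reduction you draw from it has a genuine gap: you need $(t+D)\cap\Blow{X}_\eta$ to sit uniformly inside some fixed ball $B_{0,R}$ as $\eta\to 0$, and you claim this follows from condition (1), arguing that a sequence escaping to infinity would give a point of $T^\Proj\cap D^\Proj$ ``in a nontransverse fashion.'' That last step is unjustified. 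Condition (1) asserts transversality, not disjointness, and as soon as $\dim T + k > n$ a \emph{transverse} intersection $T^\Proj\cap D^\Proj$ is generically nonempty. In that situation $(t+D)\cap\Blow{X}_0 = (t+D)\cap T$ can be unbounded with all three hypotheses holding. A concrete instance: $T=\{x_1x_2 = x_3x_4\}\subset\R^4$ (a smooth quadric cone, so $\sing T^\Proj=\emptyset$ and condition (3) is vacuous), $D=\{x_4=0\}\in\Grass_{3,4}$, $t=e_4$. Then $T^\Proj$ and $D^\Proj$ meet transversely along the conic $\{x_1x_2=0\}\subset D^\Proj$, condition (2) is trivial since $\R t+D=\R^4$, yet $(t+D)\cap T=\{(a,b,ab,1):a,b\in\R\}$ is an unbounded copy of $\R^2$. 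So there is no uniform $R$; the restriction to $B_{0,1/\lambda}$ does not become inessential, and your proposed reduction to the constancy of $\eta\mapsto(t+D)\cap\Blow{X}_\eta$ breaks down (indeed the theorem's two sides are a noncompact set and a bounded one, so they cannot become literally equal after a homothety as your reduction would require).

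The paper resolves exactly this difficulty by inserting a compactifying step before invoking moving-the-wall. The first homeomorphism $h_1\colon p\mapsto p/\Enorm{p}$ carries $(t+D)\cap\Blow{X}_0$ onto the hemisphere slice $H_0\cap\Blow{X}_0$ (this works precisely because $\Blow{X}_0$ is a cone), and from then on all the families $f_2,f_3,f_4$ live inside the compact closed hemisphere or disk, so properness of the parameter projection is automatic. Condition (1) is then used not for properness but to guarantee transversality of the \emph{boundary} stratum $C_\lambda=(\lambda t+D)\cap S_{0,1}$ with $\Blow{X}_0$, which is what makes the moving-the-wall hypotheses hold. If you want to keep your one-parameter reduction, you would need to replace your properness argument by first passing through the radial compactification $h_1$ as the paper does; without that step the properness claim is simply false under the stated hypotheses.
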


The proof hinges on the Thom-Mather topological trivialization theorem. We use a variant known as the moving the wall theorem (\cite{MR932724}, theorem 1), which we recall here
\begin{theorem}[Moving the Wall
]\label{thm:movi_wall_orig}
Let $X\subset\R^n$ be a Whitney stratified set, and
let $f:\R^n \to \R$ be a smooth map.
Let $X_t := f^{-1}(t)\cap X$, and for any stratum $\sigma$ of $X$ and $t\in\R$, let $\sigma_t:=f^{-1}(t)\cap \sigma$.

We endow $X_0\times\R$ with the stratification 
${\mathcal S}_0=\left\{\sigma_0\times\R\ |\ \sigma\text{ stratum of } X\right\}.$
If $f|_X$ is proper, and $\forall t\in\R$, $f$ is a submersion on each stratum of $X$,
then ${\mathcal S}_0$ is a Whitney stratification for $X_0\times\R$ and there exists a homeomorphism
$h: X\ \to\  X_0\times\R,$
which is smooth on the strata of $X$ and such that $\Pi_2\circ h= f$ with $\Pi_2$ the projection to $\R$, the second component. In addition $h$ is stratum preserving, that is, for any stratum $\sigma$, we have $h(\sigma)\subset \left(\sigma_0\times\R\right)$ (note that $(\sigma_0\times\R)\ \in {\mathcal S}_0$).
\end{theorem}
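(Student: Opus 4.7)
The target statement is the classical Thom--Mather first isotopy lemma applied to the real-valued map $f|_X$, so the plan is not to reprove the entire Thom--Mather trivialization machinery from scratch, but to lay out the standard three-step route: build a controlled stratified vector field, integrate it globally (using properness), and read off both the trivialization and the Whitney stratification of $X_0\times\R$ from the flow.

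First I would fix a \emph{system of control data} for $X$ in the sense of Mather: for each stratum $\sigma$ a tubular neighborhood $T_\sigma\subset\R^n$ with smooth projection $\pi_\sigma:T_\sigma\to\sigma$ and tubular distance $\rho_\sigma:T_\sigma\to\R_{\geq0}$, chosen so that the usual compatibility relations
\[
\pi_\sigma\circ\pi_\tau=\pi_\sigma,\qquad \rho_\sigma\circ\pi_\tau=\rho_\sigma
\]
hold on $T_\sigma\cap T_\tau$ whenever $\sigma<\tau$. Existence of such a system for a Whitney stratified set is Mather's foundational result and will be invoked as a black box.

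Next I would build a controlled stratified vector field $v$ on $X$ lifting $\partial/\partial t$, i.e.\ with $df(v)\equiv 1$ on every stratum and compatible with the control data. The construction is by induction on the depth of strata. Because $f|_\sigma$ is a submersion on each stratum, one can locally choose a smooth $\sigma$-tangent vector field $v_\sigma$ with $df(v_\sigma)=1$, and patch such local lifts into a global $v_\sigma$ on $\sigma$ using a partition of unity. Passing from a stratum $\sigma$ to a larger stratum $\tau$ with $\sigma<\tau$, one first defines $v$ on $T_\sigma\cap\tau$ by pulling back along the projections, using the control relations to guarantee that the resulting vector field is tangent to $\tau$ and restricts correctly on $\sigma$; one then extends it to all of $\tau$ via a partition of unity subordinate to $\{T_\sigma\cap\tau\}\cup\{\tau\setminus\overline{T_\sigma'}\}$ for a smaller tube $T_\sigma'$. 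The condition $df(v)=1$ is preserved at every step because convex combinations of lifts of $\partial/\partial t$ are again lifts of $\partial/\partial t$.

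Having produced $v$, I would integrate it to obtain a flow $\phi_s$. Properness of $f|_X$ forces the flow to be complete: along any integral curve $f$ grows linearly, so if the curve escaped in finite time it would have to leave every compact set of a fiber, contradicting properness of $f|_X$. Then
\[
h:X\longrightarrow X_0\times\R,\qquad h(x)=\bigl(\phi_{-f(x)}(x),\,f(x)\bigr),
\]
is the desired homeomorphism: it is a bijection with inverse $(y,t)\mapsto\phi_t(y)$, its second component is $f$ by construction, and it is smooth on each stratum because $v$ is smooth there. Controllability of $v$ means the flow commutes with the tube projections, so it preserves strata and sends $\sigma$ into $\sigma_0\times\R$. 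Finally, to see that $\mathcal{S}_0=\{\sigma_0\times\R\}$ is a Whitney stratification of $X_0\times\R$, I would use that $f|_\sigma$ submersive implies $\sigma_0$ is a smooth submanifold of $\sigma$, and then observe that Whitney $(b)$ along $\sigma<\tau$ at a point of $\sigma_0$ restricts to Whitney $(b)$ along $\sigma_0<\tau_0$, with the extra $\R$-factor trivially preserving the condition.

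The only genuinely hard step is the inductive construction of the controlled lift $v$: ensuring simultaneously that $v$ is tangent to every stratum, satisfies $df(v)=1$, and is compatible with the control data across adjacent strata is precisely where Whitney's condition $(b)$ is used, and it is the technical core of Thom--Mather theory. A fully self-contained treatment would occupy several pages, which is why the statement is invoked here as a recalled theorem rather than rederived; the plan above is exactly the proof given in \cite{stratif:Mather} and revisited in \cite{MR932724}.
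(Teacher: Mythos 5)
The paper does not prove this statement: it is explicitly recalled as an external result (``the moving the wall theorem (\cite{MR932724}, theorem 1), which we recall here''), so there is no internal proof against which to compare. Your sketch is a correct high-level outline of the standard Thom--Mather argument underlying that cited theorem --- control data, a controlled vector field lifting $\partial/\partial t$ (using the submersion hypothesis stratum by stratum and Whitney $(b)$ for compatibility across adjacent strata), completeness of the flow via properness of $f|_X$, and the explicit formula $h(x)=(\phi_{-f(x)}(x),f(x))$ --- and you correctly identify the controlled lift as the only genuinely hard step. Since the paper treats the theorem as a black box, your choice to give a proof outline rather than a full rederivation is consistent with the paper's usage and with the references it points to.
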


We introduce the following definitions to carry out the proof of theorem \ref{thm:tang_germ_topo_idem}
\begin{defi}\nomenclature{$C_\lambda$}{$\lambda$-cylinder}\nomenclature{$D_\lambda$}{$\lambda$-disk}\nomenclature{$H_\lambda$}{$\lambda$-half sphere}
For any $D\in\Grass_{k,n}(\R)$, $t\in\orthog{D}$ and $\lambda,\kappa\in\R$, we define:
\begin{align*}
C_\lambda&= (\lambda t+ D)\cap S_{0,1},\\
D_\lambda&= (\lambda t+ D)\cap B_{0,1},\\
H_\lambda&= ( (\lambda + \R_+^*) t+ D)\cap S_{0,1} = \cup_{\kappa>\lambda} C_\kappa ,\\
F_{\lambda,\kappa}&= \left\{p\in(\lambda+\R_+^*)t+D \mid (1-\kappa)\left(\Enorm{p}^2-1\right) + \kappa\left((p\lvert t)-\lambda\Enorm{t}^2\right)=0\right\}\\
&\text{ where $(.|.)$ is the scalar product, and $\Enorm{.}$ is the Euclidian norm.}
\end{align*}
We also define the topological closures of these sets: $\overline{D_\lambda}=D_\lambda \cup C_\lambda$, $\overline{H_\lambda}=H_\lambda \cup C_\lambda$, and $\overline{F_{\lambda,\kappa}} = F_\lambda \cup C_\lambda$.
\end{defi}
The formal definition of $F_{\lambda,\kappa}$ may appear a bit obscure, but $F_{\lambda,\kappa}$ is simply a family of spherical sectors deforming $F_{\lambda,0}=H_\lambda$ into $F_{\lambda,1}=D_\lambda$  while leaving $C_\lambda$ fixed. The $F_{\lambda,0},F_{\lambda,0.5},$ and $F_{\lambda,1}$ are depicted in figure \ref{fig:spherical_to_planar_slice} as part of the homeomorphism $h_3$.

\begin{proof}[\textit{Proof of theorem \ref{thm:tang_germ_topo_idem}}]
The complete homeomorphism is built in four steps:
$$(t+D)\cap \Blow{X}_0 \ \stackrel{h_1}{\approx}\ H_0(t,D)\cap \Blow{X}_0 \ \stackrel{h_2}{\approx}\ H_\lambda(t,D)\cap\Blow{X}_0 \ \stackrel{h_3}{\approx}\ D_\lambda(t,D)\cap\Blow{X}_0\ \stackrel{h_4}{\approx} D_\lambda(t,D)\cap\Blow{X}_\epsilon$$

\begin{figure}[ht]
\centering  \def\svgwidth{\columnwidth}  %
\executeiffilenewer{spherical_to_planar_slice.svg}{spherical_to_planar_slice.pdf}%
{inkscape -z -D --file=spherical_to_planar_slice.svg %
--export-pdf=spherical_to_planar_slice.pdf --export-latex}%
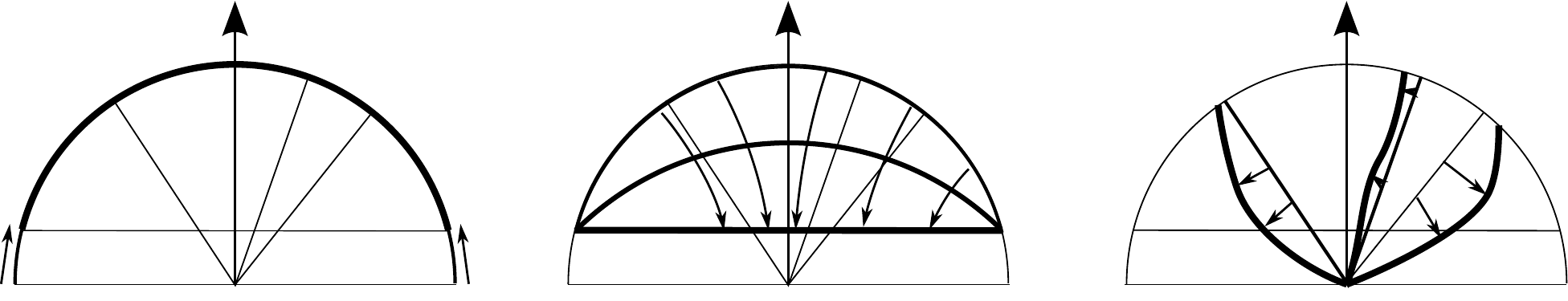%
 
\caption{From left to right, $h_2$ retracts the hemisphere $H_0$ up to $H_\lambda$. Then $h_3$ flattens $H_\lambda(\!=\! F_{\lambda,0})$ to $D_\lambda(\!=\! F_{\lambda,1})$ (an intermediate transition $F_{\lambda,0.5}$ is also depicted). Finally $h_4$ perturbs the tangent cone $\Blow{X}_0$ into $\Blow{X}_\epsilon$, which is homothetic to the original germ $X$.}
\label{fig:spherical_to_planar_slice}
\end{figure}

We define $h_1$ as $$h_1:\ p\in(t+D)\mapsto \frac{p}{\Enorm{p}}\in H_0(t,D).$$
This is clearly a homeomorphism, and since $\Blow{X}_0$ is a cone, $h_1$ leaves $\Blow{X}_0$ stable, and is thus a homeomorphism between $(t+D)\cap \Blow{X}_0$ and $H_0(t,D)\cap \Blow{X}_0$.

\begin{figure}[ht]
\centering  \def\svgwidth{\columnwidth}  %
\executeiffilenewer{spherical_to_planar_slice_obstructions.svg}{spherical_to_planar_slice_obstructions.pdf}%
{inkscape -z -D --file=spherical_to_planar_slice_obstructions.svg %
--export-pdf=spherical_to_planar_slice_obstructions.pdf --export-latex}%
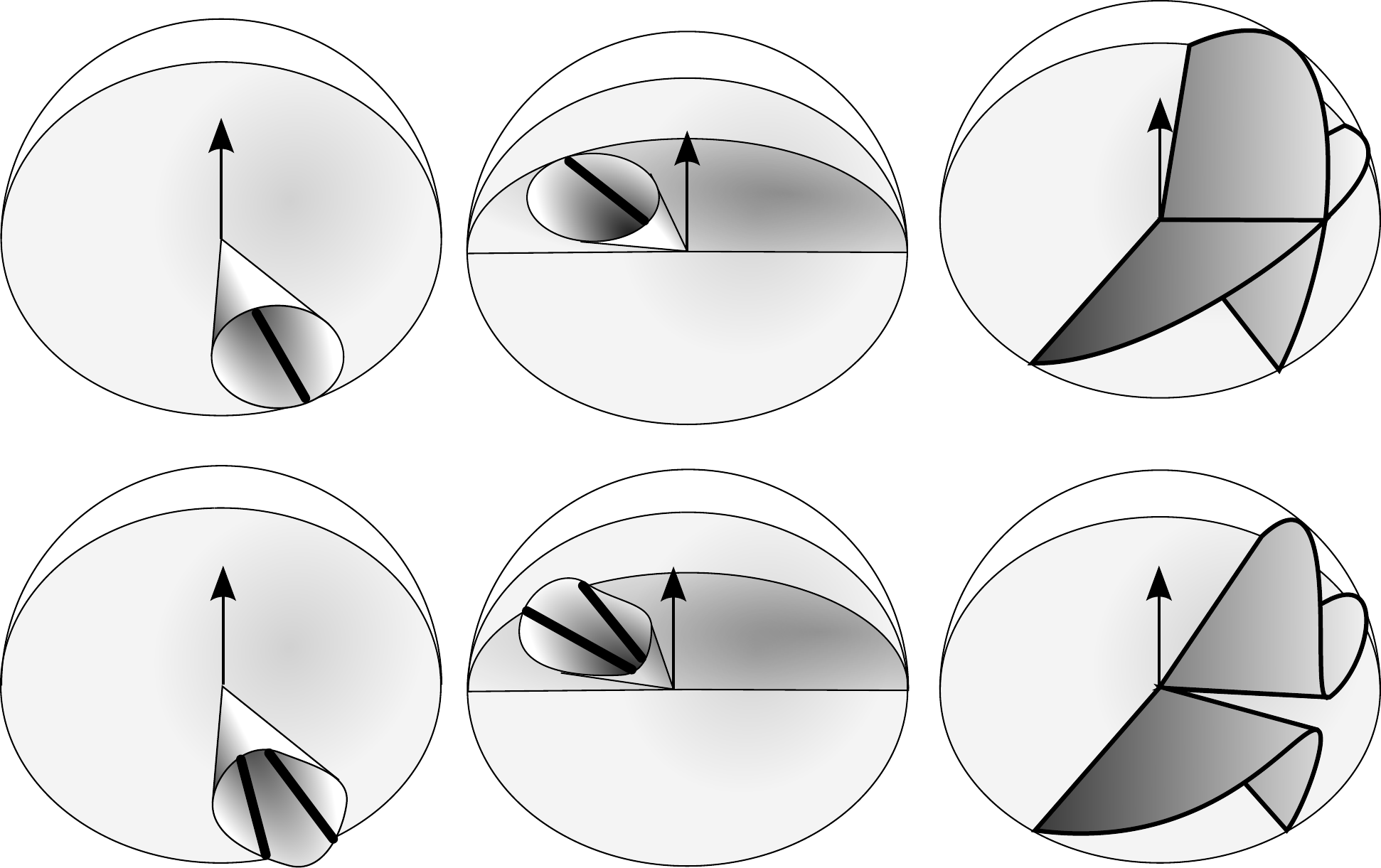%
 
\caption{The obstructions are shown on top with the unwanted change in topology that may result below. From left to right, they correspond to conditions 1, 2 and 3 in theorem \ref{thm:tang_germ_topo_idem}: the tangent cone is tangent to $D$, tangent to $\R t+D$, or singular. In the left and middle column, a line splits into two, and in the right column the surface splits into two wings.}
\label{fig:spherical_to_planar_slice_obstructions}
\end{figure}

We construct the homeomorphisms $h_2$, $h_3$ and $h_4$ depicted in figure \ref{fig:spherical_to_planar_slice} by applying the moving the wall theorem to:
$$f_2=\left\{(\overline{H_\lambda} \cap \Blow{X}_0, \lambda) \subset (\R t+D)\times \R \mid \lambda\in\R\right\} \mapsto \lambda$$
$$f_3=\left\{(\overline{F_{\lambda,\kappa}} \cap \Blow{X}_0, \kappa) \subset (\R t+D)\times \R  \mid \kappa\in\R\right\} \mapsto \kappa$$
$$f_4=\left\{(\overline{D_\lambda} \cap \Blow{X}_\epsilon, \epsilon) \subset \R^n \times \R  \mid \epsilon\in\R\right\} \mapsto \epsilon$$
According to the moving the wall theorem we need only stratify the source spaces of these functions and show they are proper submersions on each stratum to prove the existence of the required homeomorphisms. The possible obstructions that can prevent the functions $f$ from being submersive are depicted in figure \ref{fig:spherical_to_planar_slice_obstructions} and correspond to conditions 1,2, and 3 in theorem \ref{thm:tang_germ_topo_idem}. It is key to notice that all the functions $f$ are globally defined for parameters in $\R$, and not just for non-negative $\lambda$, $\kappa$ and $\epsilon$: This is how we prove that $f_4$ is a submersion at $0$.

By condition 3 of theorem \ref{thm:tang_germ_topo_idem}, $T=\Blow{X}_0$ is smooth in a neighborhood of $(\R t+ D)$. By condition 2, $\Blow{X}_0$ and $(\R t+D)$ are transverse and $\Blow{X}_0\cap(\R t+D)$ is thus also smooth. Therefore the constant family $(\Blow{X}_0\cap(\R t+D)) \times \R \subset (\R t + D)\times \R$ consists of a single smooth stratum.

On the other hand the families $\overline{H_\lambda}$, and $\overline{F_{\lambda,\kappa}}$ are Whitney stratified by their interiors ($H_\lambda$ and $F_{\lambda,\kappa}$) and $C_\lambda$.
To apply the moving the wall theorem to $f_2$ and $f_3$, it thus suffices to show that the stratifications of these latter families are transerve to $(\Blow{X}_0\cap(\R t+D)) \times \R\subset (\R t + D)\times\R$.

For $f_2$ the two strata to consider are $C_\lambda$ and $H_\lambda$. For $C_\lambda$, condition 1 in theorem \ref{thm:tang_germ_topo_idem} ensures that $C_0$ is transverse to $T=\Blow{X}_0$, and since transversality is an open condition and we work in a compact set $C_0$, there exists $\lambda_0>0$ such that $C_\lambda$ is transverse to $\Blow{X}_0$ for all $\lambda \in [0,\lambda_0]$. Since $\Blow{X}_0$ is a cone, the tangent space at $p\in\Blow{X}_0$ contains the direction $(p-0)$ which is never contained in the tangent space at $p\in H_\lambda$. Since $H_\lambda$ is of codimension $1$ in $(\R t+D)$, the sum of the tangent spaces to $\Blow{X}_0\cap (\R t+D)$ and to $H_\lambda$ is of maximal dimension, that is $\Blow{X}_0\cap (\R t+D)$ and $H_\lambda$ are transverse for all $\lambda\ge 0$. Therefore $C_\lambda \cap \Blow{X}_0$ and $H_\lambda \cap \Blow{X}_0$ stratify the source space of $f_2$, and since projection to the parameter of a family ($\lambda$ here) is automatically a submersion, $f_2$ is submersive on this stratification. The moving the wall theorem thus asserts the existence of $h_2$.

For $f_3$, we consider the strata $C_\lambda$ and $F_{\lambda,\kappa}$.
As we just mentioned $C_\lambda$ is transverse to $\Blow{X}_0$ for $\lambda$ small enough, and since $C_\lambda$ is fixed as the parameter $\kappa$ varies this transversality is retained and $C_\lambda\cap \Blow{X}_0$ is a stratum.
In a similar way as for $H_\lambda$ above, the sets $F_{\lambda,\kappa}\ (\kappa\in[0,1])$ are all transverse to $\Blow{X}_0$ in $(\R t + D)$ since $\Blow{X}_0$ is a cone.
Therefore $F_{\lambda,\kappa}\cap\Blow{X}_0$ and $C_\lambda\cap\Blow{X}_0$ stratify the source space of $f_3$, and $f_3$ is a submersion because it projects to the parameter $\kappa$ on these strata. This shows the existence of $h_3$ by the moving the wall theorem.

For $f_4$ we need to consider the strata $C_\lambda$ and $D_\lambda$ that make up $\overline{D_\lambda}$. 
As $\lambda$ was picked small enough earlier so that $C_\lambda$ and $D_\lambda$ are transverse to $\Blow{X}_0$, and since transversality is an open condition and $\overline{D_\lambda}$ is compact, $\Blow{X}_\epsilon$ will remain transverse to $C_\lambda$ and $D_\lambda$ in $(\R t+D)$ for all $\epsilon\in(-\epsilon_1,\epsilon_1)$ for $\epsilon_1$ small enough.
But by condition 2, $(\R t+D)$ and $\Blow{X}_0$ are transverse, so by reducing $\epsilon_1$ further to $\epsilon_0>0$, we can ensure that $\Blow{X}_\epsilon$ is transverse to $C_\lambda$ and $D_\lambda$ in $\R^n$ for $\epsilon \in (-\epsilon_0,\epsilon_0)$.

Therefore $C_\lambda \cap \Blow{X}_\epsilon$ and $D_\lambda \cap \Blow{X}_\epsilon$ ($\epsilon\in(-\epsilon_0,\epsilon_0)$) form a Whitney stratification of the source space of $f_4$, and $h_4$ exists by the moving the wall theorem.
\end{proof}
\begin{coro}
Let $X\in\R^n$ be a real analytic germ at $0$ and $T$ its tangent cone. Let $\Betti_i$ be any local Betti number.
For any elements $D\in\Grass_{k,n}$ and $t\in\orthog{D}$, if conditions (1),(2) and (3) in theorem \ref{thm:tang_germ_topo_idem} are satisfied, then 
$$\Betti_i(X,D,t)=\Betti_i(T,D,t).$$
\end{coro}
\begin{proof}
We have:
\begin{align*}
\Betti_i(X,D,t)&= \lim_{\epsilon\to 0}\lim_{\lambda\to 0} \Betti_i((\lambda t+ D)\cap B_{0,\epsilon}\cap X) &\text{by definition}\\
&= \lim_{\epsilon\to 0}\lim_{\lambda\to 0} \Betti_i((\lambda/\epsilon t + D)\cap B_{0,1}\cap \Blow{X}_\epsilon) &\text{by $1/\epsilon$-homothety}\\
&= \Betti_i((t+D)\cap\Blow{X}_0). &\text{ by theorem  \ref{thm:tang_germ_topo_idem}}
\end{align*}
As $T$ is already a cone, its conic \mbox{blow-up} is the constant family $\Blow{T}_\epsilon=\Blow{X}_0=T$, and thus  $\Betti_i((t+D)\cap\Blow{X}_0)=\Betti_i(T,D,t)$, which completes the proof.
\end{proof}
We use the previous corollary, which is dependent on a direction $D$, to yield the desired relation between the Betti numbers of the germ and of its tangent cone, which is direction-independent. 
\begin{coro}\label{cor:tang_germ_mult_equa}
For $X\subset\R^n$ be a real analytic germ with pure dimensional complexification. Let  $T$ be the tangent cone to $X$, and let $s$ be the dimension of $\sing{T}\subset\R^n$. Let $k\in\N$ such that $k+s<n$, and $\Betti$ be any function of the local Betti numbers $(\Betti_i(.,k))_{i\in\N}$. Then
$\Betti(X,k)=\Betti(T,k).$
\end{coro}
\begin{proof}
It suffices to show that the three conditions of theorem \ref{thm:tang_germ_topo_idem} are satisfied for $D\in\Grass_{k,n}$ generic and $t\in\orthog{D}$ generic, because then all the directional local Betti numbers will be equal at once.

We first consider conditions 1 and 2. It is true in general that for $L\in\Grass_{m,n}$  and for $W^\Proj$, a Whitney stratified projective analytic variety, $L^\Proj$ and $W^\Proj$ are generically transverse.
Applying it to $L\in\Grass_{k+1,n}$ and $W^\Proj=T^\Proj$ we have that $L^\Proj$ and $T^\Proj$ are transverse for $L$ in a set $G\subset\Grass_{k+1,n}$ of measure one. Therefore for $D\in\Grass_{k,n}$ in a subset $G_2\subset\Grass_{k,n}$ of measure one, we have $(\R t+D)\in G$  for $t$ generic in $\orthog{D}$. In other words, condition 2 of theorem \ref{thm:tang_germ_topo_idem} is met for $D\in G_2$ and $t$ generic in $\orthog{D}$.

Similarly, for $D\in\Grass_{k,n}$ and $W^\Proj=T^\Proj$ we have that $D$ and $T^\Proj$ are transverse over a set $G_1\subset\Grass_{k,n}$ of measure one.That is, condition 1 is met for $D\in G_1$. Since $G_1\cap G_2\subset \Grass_{k,n}$ is of measure one as the intersection of two measure one sets, conditions 1 and 2 are met for $D\in\Grass_{k,n}$ generic and $t\in\orthog{D}$ generic.

For condition 3, we have $\dim\left( (\R t+D)^\Proj\right)=k$ and $\dim(\sing{T^\Proj})=s-1$ as projective varieties. Since $k+s<n$, $\dim\left( (\R t+D)\right) +\dim\left(\sing T^\Proj\right)=k+s-1<n-1$, we know that $(\R t+D)$ generically avoids $\sing{T}$ in $\Proj^{n-1}$. That is, condition 3 is generically true.
\end{proof}

\subsection{Proof of point (3) of main theorem}\label{subsect:proof_main_thm}

 In order to finish the proof of the main theorem we need two more auxiliary results. The first one is the well-known Oleinik-Petrovsky bound on the number of connected components of a real (affine or projective) algebraic variety (see \cite{MR0161339},proof of theorem 2, or \cite{MR1090179,MR949442}).
\begin{theorem}[Oleinik-Petrovski/Thom-Milnor bound]\label{CCupperbound}
If $X$ is an algebraic variety defined by polynomials of degree at most $d$ in $\R^n$ or $\Proj^n(\R)$ then
$$\SBetti(X)\le d(2d-1)^{n-1}.$$
\end{theorem}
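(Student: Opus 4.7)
The result is classical and is established in the cited references (\cite{MR0161339}, \cite{MR1090179}, \cite{MR949442}); I will only sketch how I would argue, following the Oleinik-Petrovsky/Thom/Milnor strategy. The plan is to bound $\SBetti(X)$ by the number of critical points of a generic polynomial Morse function on a smooth compact approximant of $X$, and then to bound this critical-point count by Bezout's theorem over $\C$.

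First I would reduce to the case where $X$ is smooth and compact. In the projective setting $\Proj^n(\R)$ is already compact, so a small generic perturbation of the defining polynomials suffices to make the variety smooth while keeping the degrees bounded by $d$. In the affine setting I would intersect with a large ball $B_{0,R}$ whose radius has been chosen so that the topology of $X\cap B_{0,R}$ has stabilized, and then apply a Milnor-style smoothing near the boundary. When $X$ is given by several equations $f_1,\ldots,f_s$, I would use the standard trick of replacing $V(f_1,\ldots,f_s)$ by a perturbation of the single equation $f_1^2+\cdots+f_s^2=\eta$. The whole reduction has to be done so that $\SBetti$ of the original variety is at most $\SBetti$ of the smooth compact approximant.

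Next, on the smooth compact real algebraic manifold $\widetilde X$, the weak Morse inequalities give
$$\SBetti(\widetilde X)\;\le\;\#\{\text{critical points of a Morse function } \varphi \text{ on } \widetilde X\}.$$
I would take $\varphi$ to be a generic linear function (affine case) or a generic height function (projective case). Its critical-point set is cut out by a polynomial system expressing that $d\varphi$ is normal to $\widetilde X$, i.e. by the vanishing of appropriate maximal minors of the augmented Jacobian. A direct degree count — combining one factor of degree $d$ coming from the defining equation with $n-1$ factors of degree at most $2d-1$ coming from the minor conditions — produces a system whose Bezout number over $\C$ is at most $d(2d-1)^{n-1}$. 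Since every real isolated critical point is also a complex one, this bounds the number of real critical points, and hence $\SBetti(\widetilde X)$, by the claimed quantity.

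The hard part will be the first step: the smoothing and perturbation must be arranged so that no Betti number of $X$ is lost, while the polynomial degrees controlling the critical-point system stay exactly at the balance $d$ versus $2d-1$ required to produce the advertised product. This is precisely the delicate point of Milnor's original argument; once it is secured, the remaining Morse-plus-Bezout count is essentially formal.
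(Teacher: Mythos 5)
The paper does not prove this theorem; it is stated as a quoted classical result with pointers to the literature (Milnor, Oleinik--Petrovsky, Basu--Pollack--Roy), which is why no proof appears in section \ref{subsect:proof_main_thm}. Your sketch correctly identifies the Morse-theory-plus-Bezout strategy underlying those references and cites the same sources, so you are pointing at the same argument the paper is implicitly invoking.

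One technical remark on your degree accounting. After the sum-of-squares reduction you describe, the surviving single equation is $\sum_i f_i^2 - \eta = 0$, a hypersurface whose defining polynomial has degree at most $2d$, not $d$. A direct critical-point count on that hypersurface (the equation itself plus $n-1$ eliminated Lagrange conditions, each of degree at most $2d-1$) yields a Bezout number $2d\,(2d-1)^{n-1}$, which is twice the bound claimed. The extra factor-of-two saving in Milnor's Theorem 2 comes from a separate observation: the variety is a deformation retract of the thickened set $\{\sum_i f_i^2 \le \eta\}$ (inside a large ball), and one shows that $\SBetti$ of a compact manifold with boundary is controlled by \emph{half} of the critical points of a generic linear form on its boundary hypersurface, because only the critical points where the linear form points inward attach cells. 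Your phrasing attributes ``one factor of degree $d$'' directly to the defining equation, which would be incorrect for the squared system; the $d$ rather than $2d$ is recovered by the halving, not by the equation's degree. You explicitly flag the accounting as the delicate part of the argument, and that is exactly right --- this is the one step your sketch does not resolve, though it is resolved in the cited references.
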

The second auxiliary theorem was proved by J. Heintz (\cite{MR716823}
prop.3). D. Mumford also obtained a similar result that appears in the proof of theorem 1, in \cite{MR0282975} (the proof is less detailed than in J. Heintz's presentation). It states that if $\mu$ is the geometric degree of a pure dimensional affine variety, then its defining ideal is generated in degree $\mu$.
\begin{defi}[Geometric Degree]\index{Geometric degree}
Let $Z\subset\C^n$ be a variety of dimension $d$ such that the intersection with a generic $(n-d)$ affine space contains $\delta(Z)$ points (counted with multiplicity). Then $\delta(Z)$ is the geometric degree of $Z$.
\end{defi}
We will use the following straightforward generalization of their result:
\begin{theorem}[Generators in bounded degree ]\label{thm:mum_geom_alge_deg}
Let $Z\subset\C^n$ be a pure dimensional variety with geometric degree $\delta$. Then there exist finitely many generators $(g_i)_{i\in I}\in\funsheaf{\C^n}$ such that $\deg g_i\le \delta$ ($\forall i\in I$), and $Z=V((g_i)_{i\in I})$.
\end{theorem}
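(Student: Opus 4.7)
The plan is first to reduce the theorem to a separation statement: for every point $p\in\C^n\setminus Z$, I would exhibit a polynomial $g\in\C[x_1,\ldots,x_n]$ of degree at most $\delta$ with $g|_Z\equiv 0$ and $g(p)\neq 0$. The vector space $V_\delta$ of \emph{all} polynomials of degree at most $\delta$ vanishing on $Z$ is finite-dimensional, so a basis $g_1,\ldots,g_N$ of $V_\delta$ is automatically a finite set. The separation property then forces $V(g_1,\ldots,g_N)=Z$, yielding both the finite-generation claim and the degree bound simultaneously.

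To produce the separating polynomial I would use a generic linear projection. Set $r=\dim Z$ and pick a linear surjection $\pi:\C^n\to\C^{r+1}$ whose kernel $K\in\Grass_{n-r-1,n}(\C)$ is chosen generically. For pure dimensional $Z$ the map $\pi|_Z$ is generically finite, and $\pi(Z)$ is a hypersurface in $\C^{r+1}$ (the union of the hypersurface images of the irreducible components of $Z$) whose reduced degree is bounded by the geometric degree $\delta$ of $Z$, with equality when $\pi|_Z$ is generically one-to-one. Letting $P\in\C[y_1,\ldots,y_{r+1}]$ be the squarefree defining polynomial of $\pi(Z)_{\mathrm{red}}$, the pullback $g:=P\circ\pi\in\C[x_1,\ldots,x_n]$ has $\deg g=\deg P\le\delta$ and vanishes on $Z$.

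To also guarantee $g(p)\neq 0$, the projection must satisfy $\pi(p)\notin\pi(Z)$, i.e.\ $(p+K)\cap Z=\emptyset$. Since $p\notin Z$ and $\dim Z+\dim(p+K)=r+(n-r-1)=n-1<n$, a standard dimension count in the incidence variety $\{(q,K)\in Z\times\Grass_{n-r-1,n}(\C):q-p\in K\}$ shows that the locus of ``bad'' kernels has positive codimension in $\Grass_{n-r-1,n}(\C)$. Intersecting this Zariski-open condition with the open conditions ensuring (i) finiteness of $\pi|_Z$ and (ii) the degree bound $\deg\pi(Z)\le\delta$ yields a nonempty open set of admissible projections, from which the desired $g$ may be extracted.

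The main obstacle is verifying condition (ii) in the reducible pure dimensional setting: one must check that a single generic projection keeps the images $\pi(Z_i)$ of the distinct irreducible components $Z_i$ of $Z$ in general position (so that they remain distinct hypersurfaces and the generic fibers of each $\pi|_{Z_i}$ achieve the degree $\delta(Z_i)$), so that degrees add across components both for $Z$ and for $\pi(Z)$. This is the content of Heintz's original argument in \cite{MR716823} for the irreducible case, and the reduction to it is the ``straightforward'' generalization alluded to in the statement.
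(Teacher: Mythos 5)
Your proof is correct in broad outline but takes a genuinely different route from the paper. The paper's proof is a reduction to Heintz's proposition~3 as a black box for \emph{irreducible} varieties: decompose $Z$ into irreducible components $V(\I_i)$, obtain generators $g_{ij}$ of each $\I_i$ with $\deg g_{ij}\le\delta(V(\I_i))$ by Heintz, then take all cross-component products $\prod_i g_{ij_i}$; each such product has degree at most $\sum_i\delta(V(\I_i))=\delta(Z)$ (using pure dimensionality to add the geometric degrees) and they generate $\prod_i\I_i$, whose zero locus is $Z$. You instead re-derive the substance of Heintz's geometric argument directly for the pure dimensional case: reduce to separating a point $p\notin Z$ by a low-degree polynomial, then pull back the defining equation of a generic linear projection to a hypersurface. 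Your route is more self-contained and avoids the decomposition into irreducible components, which is a real structural difference; the paper's route is shorter because it delegates all the projection geometry to the cited proposition.

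Two caveats in your write-up. First, $(p+K)\cap Z=\emptyset$ only gives $\pi(p)\notin\pi(Z)$, whereas you need $\pi(p)\notin\overline{\pi(Z)}$ for $g=P\circ\pi$ to be nonzero at $p$, since $P$ cuts out the Zariski closure; this is resolved by requiring $\pi|_Z$ to be actually finite (equivalently, that the center of projection at infinity avoids the projective closure of $Z$), which makes $\pi(Z)$ closed --- you gesture at ``finiteness'' in condition~(i), but earlier you only claim generic finiteness, so you should commit to the stronger statement. Second, your closing worry about keeping the component images ``in general position'' so that degrees add is unnecessary for the theorem: you only need the upper bound $\deg\pi(Z)_{\mathrm{red}}\le\delta$, which follows immediately from the fact that the $\pi$-preimage of a generic line in $\C^{r+1}$ is a generic $(n-r)$-dimensional affine subspace meeting $Z$ in $\delta$ points; equality $\deg\pi(Z)_{\mathrm{red}}=\delta$ is not required.
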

\begin{proof}
J. Heintz's result in \cite{MR716823} (
prop.3) states that for an irreducible variety \mbox{$Y\subset\C^n$}, there exist an ideal $\I=(g_1,\ldots,g_{n+1}) \subset \funsheaf{\C^n}$ such that $Y=V(\I)$ and $\deg g_i\le \delta(Y)$ where $\delta(Y)$ is the geometric degree of $Y$.\\
Since $Z$ is equidimensional, it can be broken down into its irreducible components $V(\I_i)$ so that $Z=V(\prod_i \I_i)$. It is clear from the definition of the geometric degree that $
\delta( Z) = \sum_i \delta(V( \I_i))$. And by applying J. Heintz's result to each irreducible component we obtain generators $g_{ij}$ such that $\I_i=(g_{i1}, \ldots, g_{ik_i})$ with $\deg g_{ij}\le\delta(V(\I_i))$. Consequently, $V(Z)$ is generated by products $\prod_i g_{ij_i}$, and each of these products is of degree $\sum_i\delta(V(\I_i))=\delta(Z)$.
\end{proof}

Equipped with these last two theorems we can finally prove the third item of the main theorem \ref{thm:main}.

\begin{theorem}\label{thm:main_smooth_case}
For $X\subset\R^n$ an analytic germ of pure dimension $d$, let $\mu$ be the multiplicity $\mu(X)$ of $X$, and let $s$ be the dimension of the singular locus of $T(X)$. If $k+s < n$, then
$\SBetti(X,k)\le \mu(2\mu-1)^{k-1}.$
\end{theorem}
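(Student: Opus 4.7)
The proof plan is to chain together the ingredients already assembled: reduce from $X$ to its tangent cone via Corollary~\ref{cor:tang_germ_mult_equa}, use Heintz-Mumford to bound the degrees of the defining equations of the tangent cone, restrict those equations to a generic $k$-dimensional affine slice, and invoke Oleinik-Petrovsky. First, since $k+s < n$ and the complexification of $X$ is pure dimensional, Corollary~\ref{cor:tang_germ_mult_equa} applied to $\SBetti$ gives $\SBetti(X,k) = \SBetti(T,k)$ where $T = T(X)$, so it suffices to bound $\SBetti(T,k)$.

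Next I would pass to the complexification $T_\C$. The complexification of the tangent cone coincides with the tangent cone of $\Complex{X}$ (both are cut out by the same initial parts of the defining equations), and since $\Complex{X}$ is pure of dimension $d$ so is $T_\C$. By Proposition~\ref{prop:multproj} the geometric degree of $T_\C$ equals $\mu$, so Theorem~\ref{thm:mum_geom_alge_deg} produces generators $g_1, \dots, g_N$ of the defining ideal of $T_\C$, each of degree at most $\mu$. Because that ideal is stable under complex conjugation, the real and imaginary parts of the $g_i$ are real polynomials of degree at most $\mu$ whose common zero set in $\R^n$ is exactly $T$.

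For any $D \in \Grass_{k,n}$ and $t \in \orthog{D} \setminus \{0\}$, the algebraic set $(t+D) \cap T$ lies inside the affine $k$-space $(t+D) \cong \R^k$ and is cut out by polynomials of degree at most $\mu$ (the restrictions of the real generators just produced), so the Oleinik-Petrovsky bound (Theorem~\ref{CCupperbound}) applied in ambient dimension $k$ yields
$$\SBetti\bigl((t+D) \cap T\bigr) \le \mu(2\mu-1)^{k-1}.$$
Since $T$ is a cone, the homothety of ratio $1/\lambda$ maps $(\lambda t + D) \cap T \cap B_{0,\epsilon}$ onto $(t+D) \cap T \cap B_{0,\epsilon/\lambda}$, so for any fixed $\epsilon > 0$ the inner limit $\lambda \to 0^+$ already recovers the Betti numbers of the whole section $(t+D) \cap T$ and the outer limit is trivial; the global bound therefore transfers to $\SBetti(T,D,t)$ and, being uniform in $(D,t)$, to $\SBetti(T,k)$.

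The main obstacle is less conceptual than bookkeeping: one must verify that Heintz-Mumford, which is stated over $\C$, still furnishes real generators of the same degree (handled by the conjugation symmetry of the ideal defining the real variety $T$), and that the iterated $(\lambda, \epsilon)$-limit in the definition of local Betti numbers collapses, for a cone, to the ordinary Betti numbers of the full affine section, so that Oleinik-Petrovsky can be invoked without truncating to a ball.
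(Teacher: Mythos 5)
Your proof is correct and reaches the same bound, but it reverses the order of two key operations relative to the paper's argument: you apply Heintz--Mumford to the full tangent cone $T_\C$ in $\C^n$ and then restrict the resulting degree-$\le\mu$ generators to an arbitrary affine $k$-plane, whereas the paper first forms the generic complex slice $S=(t_\C+D_\C)\cap T_\C$ and only then applies Heintz--Mumford to $S$ inside $\C^k$. Both orders yield generators of degree at most $\mu$ in the $k$-dimensional ambient space, because the Heintz--Mumford degree bound depends only on the geometric degree and because restriction to a linear subspace cannot raise degree; so the Oleinik--Petrovsky input is the same. The trade-off is in what purity hypothesis you need before invoking Heintz--Mumford. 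You assert that purity of $\Complex{X}$ implies purity of $T_\C$; this is in fact true for complex-analytic germs (the set-theoretic tangent cone of a pure $d$-dimensional analytic set is pure $d$-dimensional), but you state it without justification and it is not one of the cited lemmas. The paper sidesteps precisely this point: it concedes that $T$ could a priori have components $T_{<d}$ of lower dimension, observes that pure dimensionality of $X$ forces $T_{<d}\subset\sing{T}$, and then uses the hypothesis $k+s<n$ to ensure the generic slice $(t_\C+D_\C)$ avoids $\sing{T}$ and hence $T_{<d}$, so that $S$ itself is pure dimensional -- the purity needed for Heintz--Mumford is then manufactured by the slicing rather than assumed for $T$. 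The paper also appeals to Lemma~\ref{lem:multtancone} inside the slice to pin down the geometric degree of $S$ as exactly $\mu$; your route replaces this with a cruder but sufficient inequality $\delta(T_\C^{\mathrm{red}})\le\mu$ from Proposition~\ref{prop:multproj}. Finally, your remark on obtaining real generators from the complex $g_i$ by taking real and imaginary parts is fine but does not actually require the conjugation-invariance you invoke (the identity $\{p\in\R^n: g_i(p)=0\}=\{p\in\R^n:\mathrm{Re}\,g_i(p)=\mathrm{Im}\,g_i(p)=0\}$ holds for any complex polynomial); the paper instead notes that $S$ is a complexification of a real variety and so its ideal has real generators, which is the same point in a different guise. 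Your treatment of collapsing the double $(\lambda,\epsilon)$-limit by homothety of the cone matches the reduction the paper carries out in the corollary following Theorem~\ref{thm:tang_germ_topo_idem}.
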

\begin{proof}
The conditions that $s+k<n$ and $X$ is pure dimensional are the conditions required to apply corollary \ref{cor:tang_germ_mult_equa}. By applying it to the sum of Betti numbers, we obtain $\SBetti(X,k)=\SBetti(T,k)$. We now bound $\SBetti(T,k)$.

First we show that generic sections of $T$ have geometric degree $\mu$. Let $D\in\Grass_{k,n}(\R)$ and $t\in\orthog{D}/\{0\}$. 
Let $T_\C$, $t_\C$ and $D_\C$ be the complexifications of $T$, $t$ and $D$, and let $S = (t_\C+D_\C)\cap T_\C$.
Let $T_{<d}$ be the union of components of $T_\C$ of dimension less than $d$. As $X$ is pure $d$-dimensional, $T_{<d}\subset \sing{T}$.  The condition $k+s<n$ thus implies that $\dim T_{<d}<n-k$, and thus for generic $D_\C$ and $t_\C$, $(t_\C+D_\C)$ avoids $T_{<d}$. Furthermore, since $(t_\C+D_\C)$ is generically transverse to $T_\C$, we have that $S$ is a pure dimensional complex variety of dimension $k+d-n$ in $t_\C+D_\C \approx \C^k$.

Now that we have picked a suitable generic section $S$, we show its geometric degree is $\mu$. Let $D'\in\Grass_{n-d,n}(\C)$, and $t'\in \orthog{D'}$ generic such that $t'+D'\subset t_\C+D_\C$. As $D$ is generic and $D'\subset D$ we can assume that $T\cap D'=\{0\}$, and $(t'+D')\cap T$ is Zariski \mbox{$0$-dimensional}. Also $(t'+D')\cap T_{<d}=\emptyset$ as $(t_\C+D_\C)$ avoids $T_{<d}$. Therefore we can apply lemma \ref{lem:multtancone} to $T, t',$ and $D'$, and we have $\mu=\dim_\C{\mathcal O}((t'+D')\cap T)$. Since $(t'+D')\cap T = (t'+D')\cap S$, we generically have $\mu=\dim_\C{\mathcal O}((t'+D')\cap S)$, that is the geometric degree of $S$ is $\mu$.

As we have made sure that $S$ is pure dimensional, we can apply the Heintz-Mumford result (theorem  \ref{thm:mum_geom_alge_deg}), and conclude that there exist generators of the defining ideal of $S$ in degree $\mu$. Since $S$ is a complexification, $S\cap \R^n = (t+D)\cap T$, and the generators of $S$ can be chosen with real coefficients.

Finally, by the Oleinik-Petrovsky bound (theorem \ref{CCupperbound}), $\SBetti(T\cap(t+D)) \le \mu(2\mu-1)^{k-1}$. Since this is true generically for $D\in\Grass_{k,n}$ and $t\in\orthog{D}$, we can conclude that $\SBetti(T,k)\le \mu(2\mu-1)^{k-1}$.
\end{proof}

\section{Proof of Optimality of Main Theorem}\label{sect:coun_exam}
In this section we prove the optimality of our main theorem, which we have formalized in our optimality theorem \ref{thm:opti_main}. It is proved by means of counter-examples whose $0^{th}$ Betti number is not controllable by the multiplicity of the germ (hence \textit{a fortiori} neither is $\SBetti$). First, in subsection \ref{exam:coun_exam_pure_dime} we give an example that shows why it is necessary to assume that the complexification of the germ is pure dimensional. This proves the first point in the optimality theorem \ref{thm:opti_main}. Then, in subsection \ref{sect:tang_cone_control} we build counter-example families of every type required by theorem \ref{thm:opti_main} by giving two explicit counter-example families, and introducing two simple transformations on germs to generate the remaining counter-example families.

\subsection{Necessity of pure dimensionality}\label{exam:coun_exam_pure_dime}
\begin{figure}[h]
\centering\def\svgwidth{5cm}%
\executeiffilenewer{example1.svg}{example1.pdf}%
{inkscape -z -D --file=example1.svg %
--export-pdf=example1.pdf --export-latex}%
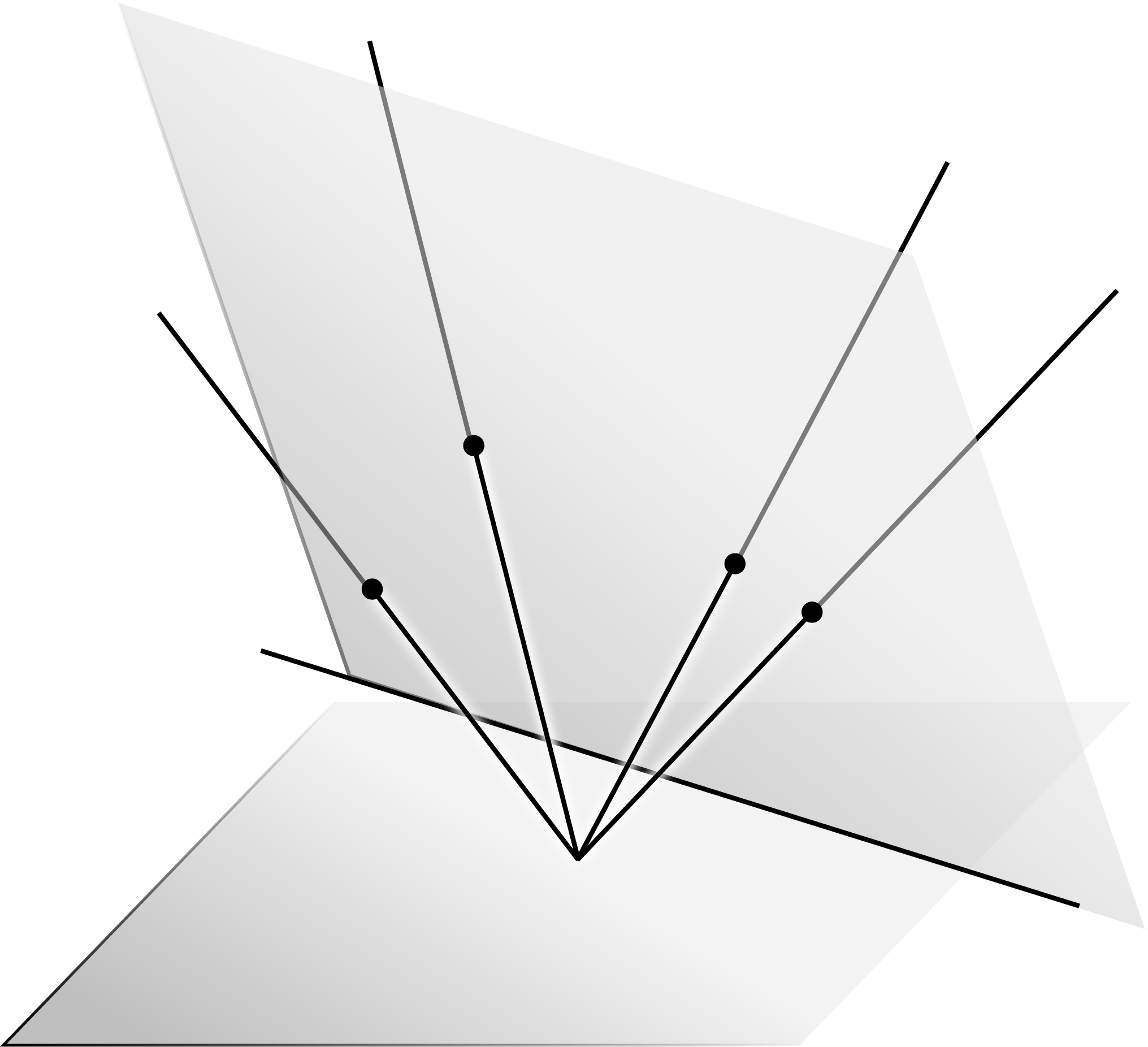%

\caption{The germ consists of a plane $V$ and an arbitrary number of lines $W_i$. Therefore a generic planar section by $P$ has as many connected components as desired because the $W_i$ do not count toward the multiplicity.}
\label{fig:example1}
\end{figure}
The first example shows a phenomenon that is not specific to the real case and that also occurs in the complex case: only the top dimensional components of a germ are taken into account by the multiplicity, but of course, lower dimensional components of the germ can provide connected components when intersecting with an affine space whose dimension is greater than the codimension of the germ. Consequently, these lower dimensional components do not change the multiplicity but can make the number of connected components in a generic section grow arbitrarily large.

Here is one of many ways to construct a family of counter-examples. For any given $l\in\N$, let $V$ be a $d$-dimensional vector space. Let $k\in\N$ such that $n-k<d$, and choose $W_1,\ldots,W_l$, $(n-k)$-dimensional vector spaces such that $\forall i\neq j,\ W_i\cap W_j=\{0\}$ and $W_i\cap V=\{0\}$. Let $X$ be the union of $V$ and all the $W_1,\ldots,W_l$. The situation for $n=3,\ d=2,\ k=2,\ l=4$ is shown in figure \ref{fig:example1}.

The multiplicity of $X$ is clearly $\mu=1$ as an $(n-d)$-dimensional affine space will generically avoid all the $W_1,\ldots,W_l$ (as $(n-d)+(n-k)<(n-d)+d=n$), and the number of points in a generic intersection with $V$ is $1$. On the other hand, a generic $k$-dimensional affine space will intersect $V$ and all the $W_1,\ldots,W_l$, giving rise to $l+1$ connected components in the intersection.

In addition, $X$ and its tangent cone (which are here the same thing) have an isolated singularity at $0$ and their intersection with a generic \mbox{$k$-dimensional} affine space is smooth, therefore the only condition of the main theorem that is not satisfied is that the complexification of the germ should be pure dimensional. This proves the first claim of the optimality theorem \ref{thm:opti_main}.

\subsection{Necessity of controlling the tangent cone}\label{sect:tang_cone_control}
To find counter-example families of all types $(n,k,s)$ as required by the optimality theorem \ref{thm:opti_main}, we use one counter-example family of type $(3,2,1)$, a series of counter-example families of type $(n,n-1,n-1)$, and two transformations that we can apply to the previous germ families to cover all remaining counter-example family types.
\subsubsection{Counter-example family of type (3,2,1)}\label{sssect:coun_fam_321}

\begin{figure}[h]
\centering \def\svgwidth{6cm}%
\executeiffilenewer{example3.svg}{example3.pdf}%
{inkscape -z -D --file=example3.svg %
--export-pdf=example3.pdf --export-latex}%
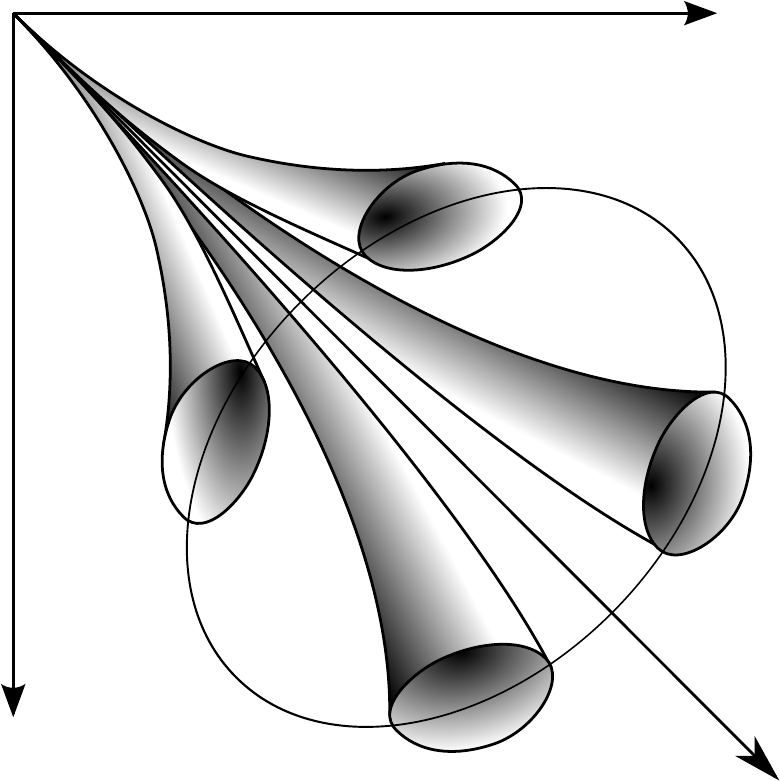%

\caption{The tangent cone is a singular line, but it can arise from any number of higher-dimensional components that degenerate on this line.}
\label{fig:example3}
\end{figure}

The counter-example family of type (3,2,1) is constructed by making ovals degenerate onto a single tangent line as they approach the origin. For any $l\ge 2$, consider 
$$g_l=(x^2+y^2-z^4)^2+\prod_{i=0}^{2l-1} \left(y-\frac{i-l+0.5}{l}z^2\right).$$
Let $X_l$ be the associated germs. The germ $X_3$ is depicted in figure \ref{fig:example3}.

Since we are considering hypersurfaces, the tangent cone is the initial part $(x^2+y^2)^2$  of the generator, which defines a real singular line, and the multiplicity is the degree of that initial part $\mu(X_l)=4$. The complexification of the hypersurface is also automatically pure dimensional.

On planar sections with constant $z$, the $g_l$ can be seen as a circle with radius $z^2$ (left summand) that is perturbed by an oscillating polynomial along the $y$ direction (right summand). Because the right summand is a product of an even number of factors, it is positive for $y<(-1+0.5/l) z^2$ and $y> (1-0.5/l) z^2$. In between its sign alternates on strips of width $z^2/l$. The positive strips separate ovals created in negative strips. The first and last negative strips ($i=0$ to $i=1$, and $i=2l-2$ to $i=2l-1$) contain the extremities $(x=0,y=\pm z^2)$ of the circle defined by the first summand. On all other strips it is elementary to check that the perturbation from the second summand stays smaller than the first summand at $x=0$, which creates two ovals on the circle (since the first summand vanishes on the circle).\\
We thus have $1$ oval for each negative strip at the two extremities of the circle and two ovals for each of the remaining $l-2$ negative strips across the circle, and therefore $2(l-1)$ ovals in total. This shows that $\Betti_0(X_l,2)\ge 2(l-1)$.

\subsubsection{Counter-example families of type (n,n-1,n-1)}\label{sssect:coun_fam_series}
\begin{figure}[h]
	\centering
	\subfloat[frontal view]{\def\svgwidth{.4\columnwidth}%
\executeiffilenewer{example2xy.svg}{example2xy.pdf}%
{inkscape -z -D --file=example2xy.svg %
--export-pdf=example2xy.pdf --export-latex}%
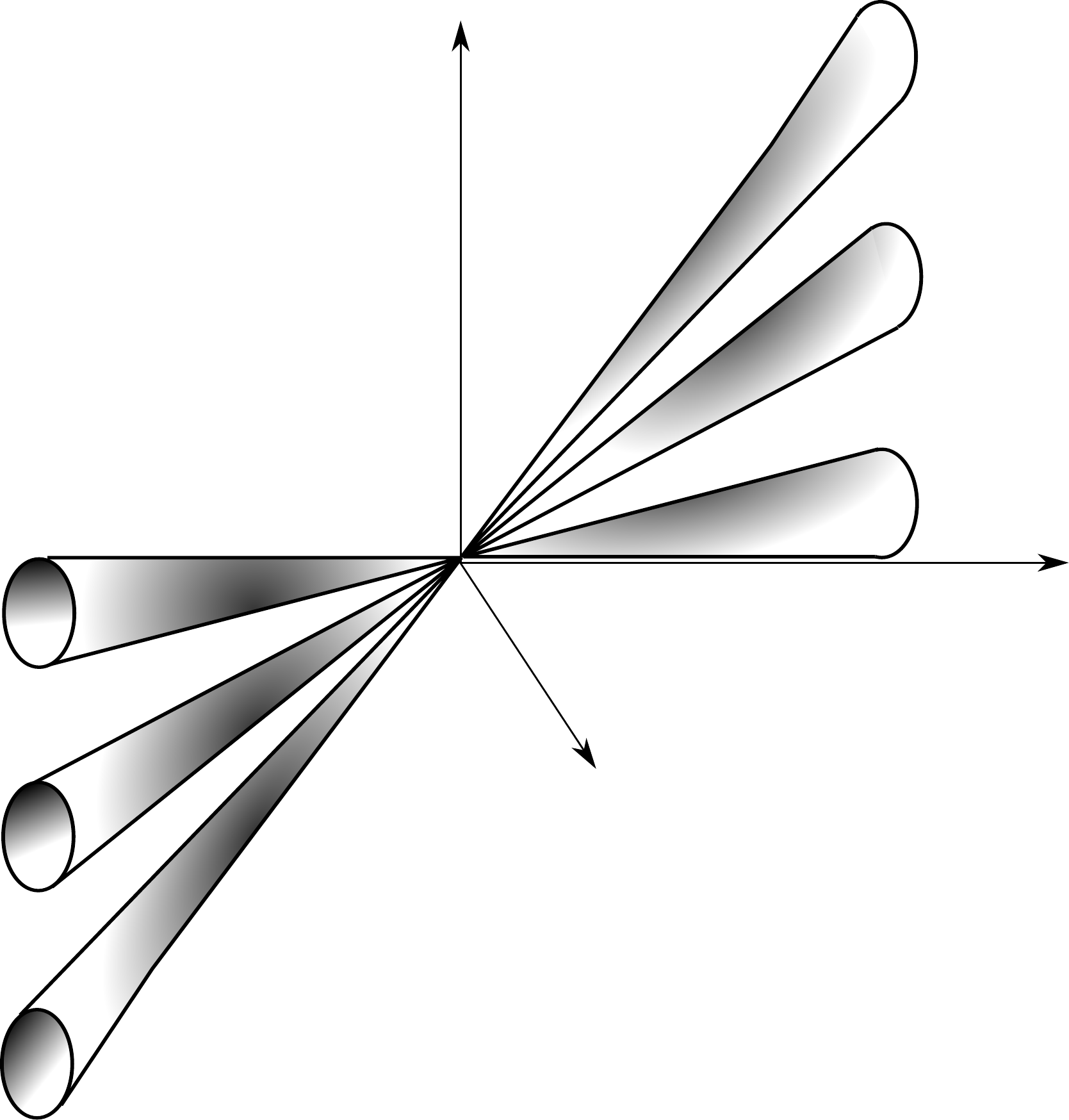%
} 
\hspace{.15\columnwidth}
	\subfloat[side view]{\raisebox{1.4cm}{\def\svgwidth{.4\columnwidth}%
\executeiffilenewer{example2xz.svg}{example2xz.pdf}%
{inkscape -z -D --file=example2xz.svg %
--export-pdf=example2xz.pdf --export-latex}%
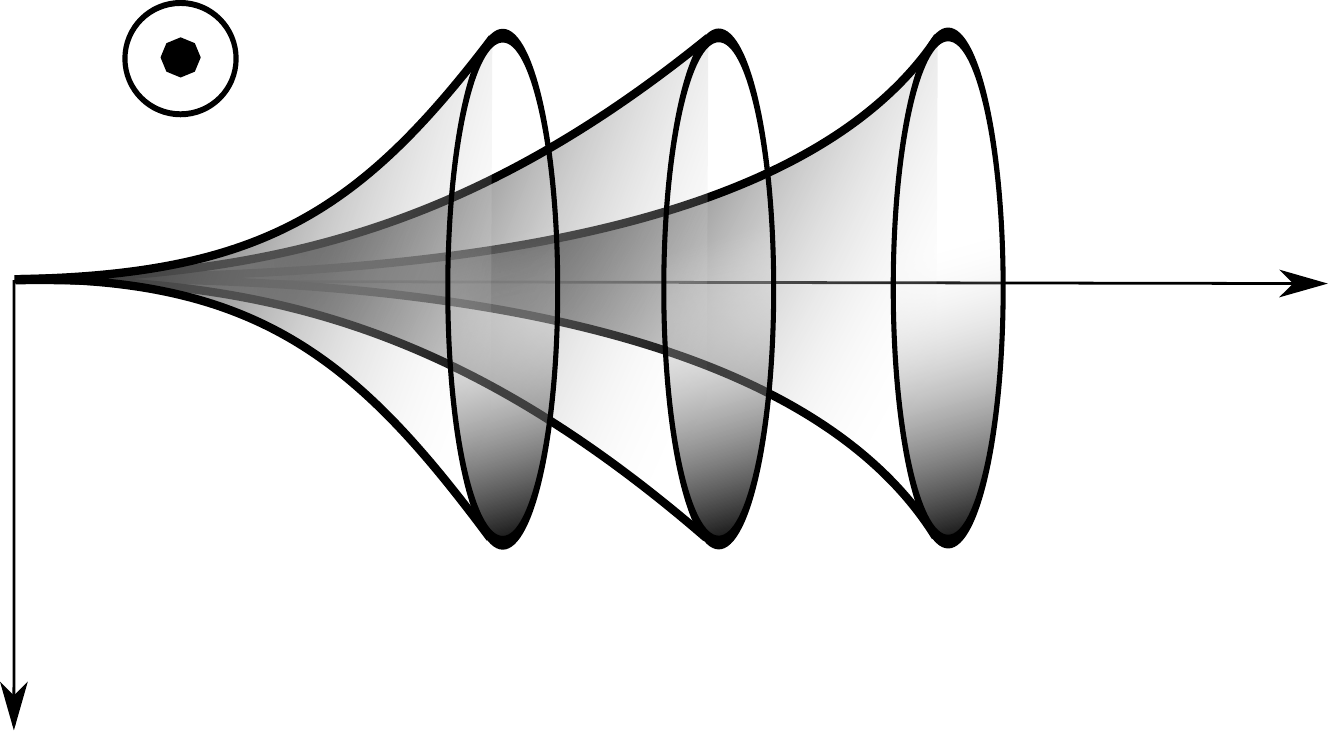%
}}
	\caption{A generic plane cannot avoid the funnel-like branches of the germs, but the algebraic tangent cone is the singular plane $z=0$.}
	\label{fig:example2}
\end{figure}

For any $n\in \N$, $n\ge 3$, we define the families of germ $X_l=V(f_l)\subset \R^n$ by
$$f_l=\prod_{r=0}^{2l-1}(x-ry)+z^2+\sum_{i=1}^{n-3} t_i^4,$$
where the first three components of $\R^n$ are associated to the variables $x,y,z$, the remaining $n-3$ components are associated to the $(t_i)_{i\in\{1,\ldots,n-3\}}$, and the last summand is taken as being zero for $n=3$. Figure \ref{fig:example2} shows what $X_3$ looks like for $n=3$.



We have $T(X_l)=V({\mathrm Init}(f_l))=V(z^2)$, and according to lemma \ref{lem:multtancone} $\mu(X_l)$ is the number of points counted with multiplicities in the intersection of $T(X_l)$ with a generic line. Since $T(X_l)$ is defined by $z^2$ alone, there are $2$ points in a generic section and $\mu(X_l)=2$. Also, as required for this counter-example, the Zariski dimension of $T(X_l)$ is $n-1$ and the complexification of the germ is also automatically pure dimensional since it is a hypersurface.

On any intersecting hyperplane $H$ for $y$ constant, it is easy to verify that the first summand of $f_l$ is negative iff $x \in \cup_{r=0}^{r=l-1}(2ry,2ry+y)$, zero iff $x\in\{0,\ldots,(2l-1)y\}$, and positive elsewhere. Since the sum of the second and third summands is $0$ when $z\!=\!t_1\!=\!\ldots\!=\!t_{n-1}\!=0$, and positive elsewhere, $f_l$ doesn't vanish where the first summand is positive, and is negative at \mbox{$x=2ry+y/2$}, \mbox{$z\!=\!t_1\!=\!\ldots\!=\!t_{n-1}\!=0$} ($r\in\{0,\ldots,l-1\}$). Therefore by continuity, $f_l$ must vanish over each interval of the form $(2ry,2ry+y)$, and the connected components over each interval are distinct from components over other intervals since there are intervening intervals of the form $(2ry+y,2ry+2y)$ where $f_l$ is positive. Since there are $l$ intervals of the form $(2ry,2ry+y)$, this shows that $X_l\cap H$ has at least $l$ connected components (there are in fact exactly $l$ compact connected components, which we do not prove in detail for the sake of brevity).

One can verify elementarily that the number of connected components in the intersection doesn't change for any intersecting hyperplane in a neighborhood of any $y$-constant hyperplane in $\AffGrass_{n-1,n}$. In the spirit of this article we can show this by invoking the fact that the germ can be Whitney stratified, and for a hyperplane $H$ close enough to the origin, $H$ is transverse to all strata. Since transversality is an open condition, by the moving the wall theorem the topology is unchanged for all hyperplanes in a neighborhood of $H$, thereby showing that hyperplane sections of $X_l$ do not generically contains less than $l$ connected components. Hence $\lim_{l\to\infty} \Betti_0(X_l,k)=\infty$.

\subsubsection{Transformations generating the remaining counter-examples}

We now use two simple transformations to generate counter-example families of all types $(n,k,s)$ such that $2\le k\le n-1$, and $n-k\le s \le n-1$ as specified in the optimality theorem \ref{thm:opti_main}.

\begin{defi}[Germ Transformations]\index{Germ transformations}\index{Embedding transformation|\\see{\mbox{Germ transformations}}}\index{Product transformation|\\see{\mbox{Germ transformations}}}\nomenclature{${\mathfrak P}$}{Product transformation}\nomenclature{${\mathfrak E}$}{Embedding transformation}
If $X_l$ is a family of real germs in $\R^n$, the product transformation of $X_l$ is
$${\mathfrak P}(X_l)=X_l\times\R\ \subset\R^{n+1}.$$
The embedding transformation of $X_l$ is
$${\mathfrak E}(X_l)=X_l\times\{0\}\ \subset\R^{n+1}.$$
\end{defi}
\begin{prop}\label{prop:transformations}
If a counter-example family $X_l$ in $\R^n$ is of type $(n,k,s)$ then ${\mathfrak P}(X_l)$ is of type $(n+1,k,s+1)$ and ${\mathfrak E}(X_l)$ is of type $(n+1,k+1,s)$. Also, if $X_l$ is pure dimensional, then the families ${\mathfrak P}(X_l)$ and ${\mathfrak E}(X_l)$ are also pure dimensional.
\end{prop}
\begin{proof}
One can check that the multiplicity of the germs, and the pure dimensionality of the families is preserved by both transformations, as well as the fact that $\Betti_0(X_l,k)= \Betti_0({\mathfrak P}(X_l),k)=\Betti_0({\mathfrak E}(X_l),k+1)$, and that the dimension of the singular locus of the tangent cone is increased by one by ${\mathfrak P}$ and preserved by ${\mathfrak E}$. These proofs are simple, and the details are left to the reader.
\end{proof}

We now use the two transformations to generate all the types of counter-example families we are missing, and prove the second point of the optimality theorem \ref{thm:opti_main}.
\begin{prop}
For any combination of $(n,k,s)\in\N^3$ that satisfies the inequalities $2\le k\le n-1$ and $n-k\le s \le n-1$, there exists a pure dimensional counter-example family $(X_l)_{l\in\N}$ of type $(n,k,s)$.
\end{prop}
\begin{proof}
We work by induction on $n$. We start at $n=3$ since $2\le k \le n-1$ forces $n\ge 3$. For $n=3$ the conditions give two possible types: $(3,2,1)$ which was given by our first counter-example family in \ref{sssect:coun_fam_321} and $(3,2,2)$ which is given by our series of counter-example families in \ref{sssect:coun_fam_series} of type $(n,n-1,n-1)$ for $n=3$.

We now prove the induction step. Let $(n,k,s)\in\N^3$ be a triplet satifying the inequalities in the statement of the proposition.
\begin{itemize}
\item If $k<n-1$, then $(n-1,k,s-1)$ still satisfies the inequalities, and by induction we can find a pure dimensional counter-example family of that type. By applying the product transformation, according to proposition \ref{prop:transformations}, we obtain a counter-example family of type $(n,k,s)$.
\item If $k=n-1$ and $s<n-1$, then $(n-1,k-1,s)$ still satisfies the inequalities, and by induction we can find a pure dimensional counter-example family of that type. By applying the embedding transformation, according to proposition \ref{prop:transformations}, we obtain a counter-example family of type $(n,k,s)$.
\item if $k=n-1$ and $s=n-1$, then the counter-example family series in \ref{sssect:coun_fam_series} provides a suitable family of type $(n,n-1,n-1)$.
\end{itemize}

\end{proof}

%

\section{Bound on Density and Lipschitz-Killing Invariants}\label{sect:appl}
In this section we present a polynomial bound on the local Lipschitz-Killing invariants of a real analytic germ, the first of which is the density  (def. \ref{def:loc_Lip-Kil_var}).
The bound we give on the density and the Lipschitz-Killing invariants of a real analytic germ (prop. \ref{prop:bound_Lip-Kil}) is relevant to the work in \cite{MR1679984,MR1832990,equ_sing_reelle_2} which all involve these quantities. The result is obtained from a localized version of the multidimensional Cauchy-Crofton formula (\cite{equ_sing_reelle_2}, theorem 3.1) and from our main theorem.

 In the first place we define the Lipschitz-Killing invariants by a multidimensional Cauchy-Crofton formula (the density is one of the Lipschitz-Killing invariants). We define the \emph{local} Lipschitz-Killing invariants by a process of limit from the original Lipschitz-Killing invariants. Then we quote theorem 3.1 in \cite{MR1679984} which gives a direct formula for the local Lipschitz-Killing invariant. This latter formula can be seen as a local version of the multidimensional Cauchy-Crofton formula. Finally we apply the main theorem \ref{thm:main} to this expression of the local Lipschitz-Killing invariants to bound them. In order to illustrate the usefulness of these new bounds we conclude by using it to bound the density of an example.

\begin{defi}[$k^\mathrm{th}$ Lipschitz-Killing invariants]\label{defin:Lip-Kil_var}
\index{$k^\mathrm{th}$ Lipschitz-Killing invariant}\nomenclature{${\Lambda}_k$}{$k$-Lipchitz-Killing invariant}
Let $\gamma_{k,n}$ be the unit measure ${\mathcal O}_n(\R)$-invariant on $\Grass_{k,n}$, where ${\mathcal O}_n(\R)$ is the orthogonal group of $\R^n$. Let ${\mathcal H}^k$ be the usual Lebesgue measure on $\R^k$.
For any real sub-analytic set $X$ in $\R^n$, the $k^\mathrm{th}$ Lipschitz-Killing invariant ${\Lambda}^k(X)$ is defined by:
$${\Lambda}_k(X)=\beta(k,n)^{-1}\int_{V\in\Grass_{k,n}}\int_{y\in V} \chi\left(X \cap \pi_V^{-1}(\{y\})\right) d{\mathcal H}^k(y) d\gamma_{k,n}(V),$$
where $\chi$ is the Euler characteristic, $\beta(k,n)$ only depends on $k$ and $n$ and $\pi_V$ is the orthogonal projection to $V$.
\end{defi}
These invariants are to be paralleled with the Vitushkin variations:
\begin{defi}[Vitushkin variations]\label{def:Vitushkin_var}\index{Vitushkin variations}
  For any set $S \subset \RR^n$, let  $V_0 ( S )$ be the number of
  connected components of $S$, and
  \[ V_i ( S ) = c ( i ) \int_{L \in \mathcal{G}_{n - i}} V_0 ( S \cap L )
     \tmop{dL}, \]
  where $\mathcal{G}_k$ is the Grassmannian of affine spaces of dimension $k$
  in $\RR^n,$ $\tmop{dL}$ is the canonical measure on $\mathcal{G}_{n -
  i}$, and 
$c ( i ) =\left(\displaystyle\int_{L \in \mathcal{G}_{n - i}} V_0 ( [ 0, 1 ]^i \cap L )
  \tmop{dL}\right)^{-1}$, (so that $V_i ( [ 0, 1 ]^i ) = 1$ and $c(n)=1$).
\end{defi}
The Vitushkin variations have a more straightforward geometric interpretation since their definition is based on the number of connected components, but the Lipschitz-Killing invariants have more interesting algebraic properties. This is due to the fact that the Lipschitz-Killing invariants enjoy the same additivity property as the Euler characteristic from which they are derived (i.e. $\Lambda_k(X\cup Y)=\Lambda_k(X)+\Lambda_k(Y)-\Lambda_k(X\cap Y)$).

\begin{defi}[Local Lipschitz-Killing invariants and Density]\label{def:loc_Lip-Kil_var}\index{$k^\mathrm{th}$ local Lipschitz-Killing invariant}
\index{$k^\mathrm{th}$ local Lipschitz-Killing invariant}\nomenclature{${\Lambda}^{loc}_k$}{$k$-Local Lipchitz-Killing invariant}
\nomenclature{$\Theta_k(X)$}{$k$-density}
For a real sub-analytic germ $X\subset\R^n$ at the origin, the $k^\mathrm{th}$ local Lipschitz-Killing invariant of $X$ is defined by:
$$\Lambda^{loc}_k(X)=\lim_{r\to 0} \frac{{\Lambda}_k\big(X\cap B^n(0,r)\big)}{{\mathcal H}^k\big(B^k(0,r)\big)},$$
where $B^k(0,r)$ stands for the $k$-dimensional ball centered at the origin of radius $r$. It is proved in \cite{MR1030848} (theorem 2.2) that this limit exists.\\
When $k=\dim(X)$, the fibers $\pi_V^{-1}(\{y\})$ are generically \mbox{$0$-dimensional}, and the Euler characteristic simply counts the number of points in those fibers. In this case the $k^\mathrm{th}$ local Lipschitz-Killing invariant of $X$ is called the $k$-density of $X$ and it is defined as:
$$\Theta_k(X)=\Lambda^{loc}_k(X)=\lim_{r\to 0} \frac{{\Lambda}_k\big(X\cap B^n(0,r)\big)}{{\mathcal H}^k\big(B^k(0,r)\big)}.$$
\end{defi}

At this point, it would already be possible to use the main theorem \ref{thm:main} to derive a bound on the $k$-density and the local Lipschitz-Killing invariants. If we did so we would obtain a bound that is not sharp. Conceptually this would amount to approximating the balls by their bounding cubes. To avoid losing this sharpness we use an alternate characterization of the $k$-density and the local Lipschitz-Killing invariants. It comes from a local version of the multidimensional Cauchy-Crofton formula.

\begin{defi}[Local polar profiles and local polar Euler characteristic]\index{Polar profile}\index{Polar multiplicity}\nomenclature{$K_j^V$}{Polar profile}\nomenclature{$e_j^V$}{Polar multiplicity}
Let $X$ be a real sub-analytic germ at the origin. Let $\mathrm{Crit}(\pi_V|_X)$ be the critical locus of $\pi_V$ on $X_{\mathrm{smooth}}$, where $X_{\mathrm{smooth}}=X\setminus \sing X$. 
Let ${\mathcal O}^k_X$ be the vector spaces $V\in\Grass_{k,n}$ such that $T(X)\cap \orthog{V}=\{0\}$, where $T(X)$ is the tangent cone to $X$.

The local polar profiles of $X$ for $V$ are the connected components $K_j^V$ ($j\in\{0,\ldots,n_V\}$) of the open germ $\pi_V(X)\setminus \pi_V\left(\mathrm{Crit}(\pi_V|_X)\right)$. 
The definition of $\mathrm{Crit}(\pi_V|_X)$ entails that the topology of the fibers $\left( X\cap \pi^{-1}(y)\right)$ ($y\in V$) is constant when $y$ runs over a given $K_j^V$. Therefore, the Euler characteristic of $\left( X\cap \pi^{-1}(y)\right)$ is constant over the $K_j^V$.

We call this constant the local polar Euler characteristic $\chi_j^V$ associated to $K_j^V$. When $\dim(V)=\dim(X)$, the fiber contains finitely many points, and $\chi_j^V$ is thus the number of points in each fiber. In this case, we denote $\chi_j^V$ by $e_j^V$.
\end{defi}
\begin{defi}[Local polar invariants]\index{Local polar invariants}\nomenclature{$\sigma_k$}{Local polar invariants}
Let $X$ be a real sub-analytic germ of $\R^n$. The $k^\mathrm{th}$ local polar invariant associated to $X$ is:
$$\sigma_k(X)= \int_{V\in{\mathcal O}^k_X} \left(\sum_{j=0}^{n_V} \chi_j^V \Theta_k(K_j^V)\right) d\gamma_{k,n}(V).$$
\end{defi}
\begin{theorem}[Local multidimensional Cauchy-Crofton formula]\label{thm:rel_lambda_sigma}
Let $X$ be a real sub-analytic germ of $\R^n$. Theorem 3.1 in \cite{MR1679984} states that there exists an upper-triangular matrix $M\in{\mathcal M}_n(\R)$ such that
$$
\left( \begin{array}{c} \Lambda^{loc}_1\\ \vdots\\ \Lambda^{loc}_n\end{array} \right)
=
\left( \begin{array}{ccccc} 
M_{1,1}& M_{1,2}&\dots&M_{1,n-1}&M_{1,n}\\
0& M_{2,2}&\dots&M_{2,n-1}&M_{2,n}\\
 \vdots&&&&\vdots\\
 0&0&\dots&0& M_{n,n}
 \end{array} \right)
\left( \begin{array}{c} \sigma_1\\ \vdots\\ \sigma_n\end{array} \right)
$$
where $M_{i,i}=1$, and for $i<j\le n$:
$$M_{i,j}= \frac{\alpha_j}{\alpha_{j-i}\ \alpha_i} C_j^i - \frac{\alpha_{j-1}}{\alpha_{j-1-i}\ \alpha_i} C_{j-1}^i,$$
with $\alpha_k$ the $k$-dimensional volume of the unit ball in $\R^k$ and $C_j^i$ the usual binomial coefficients.
\end{theorem}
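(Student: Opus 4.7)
The plan is to derive the matrix identity by expanding $\Lambda^{loc}_k(X)$ via Definitions \ref{defin:Lip-Kil_var} and \ref{def:loc_Lip-Kil_var}, and then decomposing the Grassmannian integral according to the local polar profile stratification.

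First I would rewrite $\Lambda_k(X \cap B^n(0,r))$ as the Cauchy-Crofton integral $\beta(k,n)^{-1}\int_{V \in \Grass_{k,n}} \int_V \chi\bigl(X \cap B^n(0,r) \cap \pi_V^{-1}(y)\bigr)\, d\mathcal{H}^k(y)\, d\gamma_{k,n}(V)$. For $V$ in the full-measure open subset $\mathcal{O}^k_X$, the projection $\pi_V|_X$ partitions $\pi_V(X)$ into the local polar profiles $K_j^V$, on each of which the fiber has constant Euler characteristic $\chi_j^V$. After intersecting with $B^n(0,r)$, each fiber splits into an ``interior'' piece lying over $K_j^V \cap B^k(0,r)$ and a ``boundary'' piece where the fiber is truncated by the sphere $S^{n-1}(0,r)$.

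Next I would divide by $\mathcal{H}^k(B^k(0,r))$ and let $r \to 0$, using sub-analyticity of $X$ and Hardt's theorem to justify exchanging the limit with the double integral. The interior contribution produces $\sigma_k(X)$ essentially by definition, since $\mathcal{H}^k(K_j^V \cap B^k(0,r))/\mathcal{H}^k(B^k(0,r)) \to \Theta_k(K_j^V)$. The boundary contributions are what produce the upper-triangular mixing with the $\sigma_i$ for $i < k$: truncating a fiber by the sphere $S^{n-1}(0,r)$ amounts, after rescaling, to intersecting $X$ with generic affine subspaces of dimension strictly smaller than $k$, and those are precisely the objects integrated in $\sigma_i$. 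This step explains both the upper-triangular shape of $M$ and the normalization $M_{i,i} = 1$ (the diagonal corresponds to no truncation, i.e.\ the purely interior contribution at dimension $i$).

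The main obstacle is identifying the explicit coefficients $M_{i,j}$. The plan is to disintegrate $\gamma_{k,n}$ along the double fibration $\Grass_{i,n} \leftarrow \{(V_i, V_j) : V_i \subset V_j\} \rightarrow \Grass_{j,n}$; invariance of the measures under the orthogonal group yields the normalization factor $\alpha_j/(\alpha_{j-i}\alpha_i)$ via the standard kinematic formula relating volumes of unit balls across nested Grassmannians. The two binomial coefficients $C_j^i$ and $C_{j-1}^i$ then arise from an inclusion-exclusion that separates contributions of the $j$-dimensional ball-interior from the $(j-1)$-dimensional sphere-boundary when slicing a generic $V_j$-ball by a random $V_i \subset V_j$. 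Combining these gives $M_{i,j} = \frac{\alpha_j}{\alpha_{j-i}\alpha_i}C_j^i - \frac{\alpha_{j-1}}{\alpha_{j-1-i}\alpha_i}C_{j-1}^i$ as stated. The delicate part of this final computation is keeping track of the orientation and the boundary terms simultaneously, which relies on the additivity of the Euler characteristic and on compactness of the unit sphere to ensure dominated convergence throughout.
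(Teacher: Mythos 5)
The paper does not actually prove this theorem: the statement itself attributes it verbatim to Theorem~3.1 of the cited reference, and no argument is given beyond that attribution. So there is no ``paper's proof'' to compare against; the only question is whether your sketch would stand on its own, and it would not.

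The central problem is an indexing error that puts the mixing on the wrong side of the diagonal. You assert that the boundary terms in the computation of $\Lambda^{loc}_k$ mix in $\sigma_i$ for $i<k$, which would make $M$ \emph{lower}-triangular; the statement says $M$ is \emph{upper}-triangular, i.e.\ $\Lambda^{loc}_i=\sum_{j\ge i}M_{i,j}\sigma_j$. The source of the error is visible in your sentence about ``intersecting $X$ with generic affine subspaces of dimension strictly smaller than $k$'': the fibers $\pi_V^{-1}(y)$ appearing in $\sigma_k$ for $V\in\Grass_{k,n}$ have dimension $n-k$, not $k$. Truncating such a fiber by $S^{n-1}(0,r)$ drops the fiber dimension to $n-k-1$, which is the fiber dimension occurring in $\sigma_{k+1}$, so the boundary terms feed into $\sigma_j$ with $j>k$ --- consistent with upper-triangularity. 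Carrying out your plan with the indices as written would produce a matrix of the opposite shape and contradict the result being proved. Beyond this, the identification of the explicit coefficient $M_{i,j}=\frac{\alpha_j}{\alpha_{j-i}\alpha_i}C_j^i-\frac{\alpha_{j-1}}{\alpha_{j-1-i}\alpha_i}C_{j-1}^i$ is asserted rather than derived: invoking ``the standard kinematic formula'' and ``an inclusion-exclusion'' names the expected shape of the answer without showing that the disintegration of $\gamma_{k,n}$ along nested Grassmannians, together with the bookkeeping of truncated Euler characteristics, actually yields that specific difference of two terms. As it stands this is an outline of a plan whose key geometric step points the wrong way and whose quantitative step is missing.
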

\begin{rema}
The previous result extends the local Cauchy-Crofton formula for the density in \cite{MR1679984} which asserts that for a $k$-dimensional real analytic germ $X$ at $0$ in $\R^n$:
$$\Theta_k(X)=\int_{V\in{\mathcal O}^k_X} \left(\sum_{j=0}^{n_V} e_j^V \Theta_k(K_j^V)\right) d\gamma_{k,n}(V).$$
This is simply because for $i>k$, the $(n-i)$-dimensional spaces avoid $X$ generically and thus, $\sigma_i=0$. As $M$ is upper-triangular the only non-zero coefficient involved in the expression of $\Theta_k(X)\ (=\Lambda^{loc}_k(X))$ is the diagonal coefficient $M_{k,k}=1$, hence giving the above equality for the density.
\end{rema}
The relation given by theorem \ref{thm:rel_lambda_sigma} between the $\sigma_i$ and the local Lipschitz-Killing invariants enables us to derive the following bounds on the $\Lambda^{loc}_i$:
\begin{prop}[Bound on the $\sigma_i$]\label{lem:bound_sigma}
Let $X\subset\R^n$ be a real analytic germ at the origin. Let $T$ be the algebraic tangent cone to $X$. Then
\begin{enumerate}
\item If $l=\dim\left( X\right)$, we have the inequality 
$$\sigma_l(X)=\Theta_l(X)\le \mu(X).$$
\item For any $l\in\N$, if $\dim\left( \sing{T}\right) < l$ and $\Complex{X}$ is pure dimensional, then 
$$\sigma_l(X)\le \mu(X)(2\mu(X)-1)^{n-l-1}.$$
\end{enumerate}
\end{prop}
\begin{proof}
We start from the expression of $\sigma_l(X)$. Let $k=n-l$. The bound stems from the simple fact that the sum of Betti numbers for a given set is a bound on the Euler characteristic of that set since this latter one is an alternating sum of Betti numbers. Under the hypothesis of the proposition, we can apply the main theorem \ref{thm:main}, and we can conclude that for a generic $V\in\Grass_{l,n}$ and $y\in V$
\begin{align*}
\lim_{\lambda\to 0}\chi\left(X\cap \pi_V^{-1}(\lambda y)\right)&\le \mu(X) &&\text{in case 1},\\
\lim_{\lambda\to 0}\chi\left(X\cap \pi_V^{-1}(\lambda y)\right)&\le \mu(X)(2\mu(X)-1)^{n-l-1} &&\text{in case 2}.
\end{align*}
If $\Theta_l(K_j^V)$ is non-zero, for some $y$ we have a whole segment $(0,y)\subset \pi_V(K_j^V)$. Therefore, for that $y$ we have
$$\lim_{\lambda\to 0}\chi\left(X\cap \pi_V^{-1}(\lambda y)\right)=\chi\left(X\cap\pi_V^{-1}(y)\right)=\chi_j^V.$$
We then replace $\chi_j^V$ in the definition of $\sigma_l(X)$ with $\lim_{\lambda\to 0}\chi\left(X\cap \pi_V^{-1}(\lambda y)\right)$ and use the inequalities above to yield
\begin{align*}
\sigma_l(X)&\le \mu(X) \int_{V\in{\mathcal O}^l_X} \left(\sum_{j=0}^{n_V} \Theta_l(K_j^V)\right) d\gamma_{l,n}(V) &&\text{in case 1},\\
\sigma_l(X)&\le \mu(X)(2\mu(X)-1)^{l-1}\int_{V\in{\mathcal O}^l_X} \left(\sum_{j=0}^{n_V} \Theta_l(K_j^V)\right) d\gamma_{l,n}(V) &&\text{in case 2}.
\end{align*}
Notice that if $\Theta_l(K_j^V)$ was zero, the inequalities on $\chi_j^V$ might not hold (e.g. the counter-example families in section \ref{sect:coun_exam}), but this does not matter as they do not count in the Cauchy-Crofton expression of $\sigma_l(X)$. Finally by definition of the $K_j^V$ we have: 
$$\cup_{j=0}^{n_V} K_j^V = V_0\setminus\pi_V\left(\mathrm{Crit}(\pi_V|_X)\right),$$
where $V_0$ denotes the germ at $0$ associated to $V$. Furthermore the previous union is disjoint and $\pi_V\left(\mathrm{Crit}(\pi_V|_X)\right)$ is of measure $0$. This shows that
\begin{align*}
\sum_{j=0}^{n_V} \Theta_l(K_j^V) &= \Theta_l\Big( V_0\setminus\pi_V\big(\mathrm{Crit}(\pi_V|_X)\big)\Big)\\
 &= \Theta_l(V_0)\quad=\quad 1.
\end{align*}
In the end we obtain the desired result, since ${\mathcal O}^l_X$ is dense in $\Grass_{l,n}$:
\begin{align*}
\sigma_l(X)&\le \mu(X) \int_{V\in{\mathcal O}^l_X} 1\ d\gamma_{l,n}(V)\\
&\le \mu(X) &&\text{in case 1},\\
\sigma_l(X)&\le \mu(X)(2\mu(X)-1)^{l-1}\int_{V\in{\mathcal O}^l_X} 1\ d\gamma_{l,n}(V)\\
&\le \mu(X)(2\mu(X)-1)^{l-1} &&\text{in case 2}.
\end{align*}
\end{proof}
\begin{prop}[Bound on local Lipschitz-Killing invariants]\label{prop:bound_Lip-Kil}
Let $X$ be an analytic germ at $0$ of dimension $d$ such that $\dim\left( \sing{T}\right) < k$. Then we have the following bound on the $k^\mathrm{th}$ local Lipschitz-Killing invariant of $X$:
$$\Lambda^{loc}_k(X)\le M_{k,d}\ \mu(X) + \sum_{l=k}^{d-1} M_{k,l}\ \mu(X)(2\mu(X)-1)^{l-1},$$
where $M$ is defined as in theorem \ref{thm:rel_lambda_sigma}.
\end{prop}
\begin{proof}
By proposition \ref{lem:bound_sigma}, we have a bound in terms of the multiplicity for every $\sigma_l(X)$. We can store those bounds in a vector $b$. According to theorem \ref{thm:rel_lambda_sigma}, the vector $(\Lambda_l(X))$ which is made up of the local Lipschitz-Killing invariants, is the image of the vector $(\sigma_l(X))$ through $M$. The coefficients of $M$ are all non negative, therefore after applying $M$ to the vector $b$, we obtain a vector whose components bound the $\Lambda_l(X)$. By expanding the product $Mb$ we obtain the bounds announced in the proposition. The sum ends at $M_{k,d}$ because the $\sigma_l(X)$ for $l>d$ all vanish (as $(n-l)$-dimensional spaces generically avoid $X$).
\end{proof}
\begin{rema}
For $k=\dim(X)$, the previous proposition gives:
$$\Lambda^{loc}_k(X)\le M_{k,k}\mu(X)=\mu(X).$$
As $\Lambda^{loc}_k(X)=\Theta_k(X)$ by definition, the previous proposition yields the same result as the first item of proposition \ref{lem:bound_sigma} in this case.
\end{rema}

So that the reader can appreciate the sharpness of our bound in view of existing results, we now recall the known Oleinik-Petrovsky/Thom-Milnor bound for the density:
\begin{theorem}[The Oleinik-Petrovsky/Thom-Milnor Bound on Density]\label{thm:thom-milnor_dens_bound}
For an analytic germ $X\subset\R^n$ of dimension $l$ defined by functions $f_1,\ldots,f_k$ of degree $d_1,\ldots,d_k$, 
$$\Theta_l(X)\le (d+1)(2d+1)^{n-l-1}\quad\mathrm{where}\quad d=\sum_{i=1}^k d_i.$$
\end{theorem}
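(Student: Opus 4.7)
The strategy is to combine the local multidimensional Cauchy--Crofton formula for the density (the remark following theorem~\ref{thm:rel_lambda_sigma}) with the classical Oleinik--Petrovsky/Thom--Milnor bound applied fiberwise. First I would invoke
\[
\Theta_l(X) \;=\; \int_{V \in {\mathcal O}^l_X} \Bigg(\sum_{j=0}^{n_V} e_j^V\, \Theta_l(K_j^V)\Bigg)\, d\gamma_{l,n}(V).
\]
As exploited in the proof of proposition~\ref{lem:bound_sigma}, the polar profiles satisfy $\sum_{j=0}^{n_V} \Theta_l(K_j^V) = \Theta_l(V_0) = 1$ for every $V \in {\mathcal O}^l_X$. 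It therefore suffices to bound $e_j^V$ uniformly over generic $V$ and all $j$: if $e_j^V \le B$ for generic $V$, then $\Theta_l(X) \le B$ since $\gamma_{l,n}$ is a probability measure.

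For generic $V \in \Grass_{l,n}$ and $y$ in a polar profile $K_j^V$, the fiber $\pi_V^{-1}(y) \cap X$ is a $0$-dimensional real algebraic subset of the affine space $\pi_V^{-1}(y) \cong \R^{n-l}$. It is cut out by the restrictions $\tilde f_1, \ldots, \tilde f_k$ of the defining polynomials to that affine space, and these restrictions remain polynomials of degrees $d_1, \ldots, d_k$. By definition, $e_j^V$ is the cardinality of this fiber, hence bounded by the total sum of Betti numbers $\SBetti\bigl(V(\tilde f_1, \ldots, \tilde f_k)\bigr)$.

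The final step is to apply the Thom--Milnor bound for a real algebraic set defined by a system of polynomial equations. In $\R^m$ such a set defined by polynomials of degrees $d_1, \ldots, d_k$ satisfies
\[
\SBetti\bigl(V(\tilde f_1, \ldots, \tilde f_k)\bigr) \;\le\; (d+1)(2d+1)^{m-1}, \qquad d = \sum_{i=1}^k d_i.
\]
Applied with $m = n-l$ this gives $e_j^V \le (d+1)(2d+1)^{n-l-1}$, and combined with the Cauchy--Crofton reduction above it yields the theorem.

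The main obstacle is establishing the sharp constant $(d+1)(2d+1)^{n-l-1}$ with combined degree $d = \sum_i d_i$ rather than the weaker $\max_i d_i$ (which is what a naive sum-of-squares reduction $\sum \tilde f_i^2$ would yield when plugged into theorem~\ref{CCupperbound}). Recovering the combined degree requires following Milnor's original Morse-theoretic argument: perturb the system by generic constants so that the perturbed variety becomes a smooth complete intersection, intersect it with a large ball (whose bounding sphere accounts for the extra $+1$ in $d+1$ and guarantees compactness), and count the critical points of a generic linear form on the resulting smooth manifold. All other pieces of the argument are soft and reduce to the direct manipulation of the Cauchy--Crofton integrand given above.
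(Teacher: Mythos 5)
Your approach is genuinely different from the paper's one-line proof, which simply cites theorem~5.5 of \cite{MR2041428} for semi-algebraic sets (encoding each equality $f_i=0$ as two inequalities). You instead re-derive the bound from within the paper's own machinery: the local Cauchy--Crofton formula for density, the identity $\sum_j\Theta_l(K_j^V)=1$, and the Oleinik--Petrovsky bound on the fibers. This mirrors the structure of the paper's proof of proposition~\ref{lem:bound_sigma}, just with a degree bound rather than a multiplicity bound on each fiber, and in that sense it is more self-contained and more in the spirit of the paper than its actual proof. The reduction to bounding $e_j^V$ is sound (though you should note why the germ-fiber points are isolated in the global fiber $V(\tilde f_1,\dots,\tilde f_k)$, so that their count is indeed $\le\Betti_0$ of the global algebraic set).

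Your final paragraph, however, rests on a misconception that reverses the direction of an inequality. You assert that a bound with the \emph{combined} degree $d=\sum_i d_i$ is sharper than one with $\max_i d_i$, and that recovering it ``requires following Milnor's original Morse-theoretic argument.'' This is backwards: since $\max_i d_i\le\sum_i d_i$ and $m\mapsto m(2m-1)^{n-l-1}$ is increasing, theorem~\ref{CCupperbound} applied with $d_{\max}=\max_i d_i$ already yields
\[
\SBetti\bigl(V(\tilde f_1,\dots,\tilde f_k)\bigr)\ \le\ d_{\max}(2d_{\max}-1)^{n-l-1}\ \le\ (d+1)(2d+1)^{n-l-1},
\]
with $d=\sum_i d_i$, and no perturbation-plus-Morse-theory detour is needed. (The combined degree is the natural parameter for semi-algebraic sets cut out by inequalities, which is why it appears in the cited reference theorem, but for a variety defined by equations the Milnor bound in terms of $\max_i d_i$ is already stronger.) So the overall route is correct, but the closing paragraph identifies a non-obstacle and sketches an unnecessary and, as stated, not obviously conclusive argument to overcome it; the short derivation above should replace it.
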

\begin{proof}
This is a straightforward application of theorem 5.5, 
 in \cite{MR2041428} for semi-algebraic sets (using 2 inequalities there to encode 1 equality here).
\end{proof}
Our bound is sharper because it is a localization of the Oleinik-Petrovsky bound: the defining functions may store information on what happens away from the origin, but the multiplicity only accounts for what happens at the origin.

For further illustration, we can work out an example where the variety $X$ is defined by 
$$\big(x(x-z^3)(x-2z^2),  y(y-z^3)(y-2z^2), (x+y)(x+y-z^3)\big).$$
Those generators have degrees $6,6$, and $4$. Therefore by the Oleinik-Petrovsky bound for the density \ref{thm:thom-milnor_dens_bound} we have 
\begin{align*}
\Theta_1(X) &\le (16+1)(2\times 16+1)^{3-1-1}
= 17\times 33 \quad = \quad 561.
\end{align*}
The multiplicity can be shown to be $3$ (see remark \ref{rem:Grobner_multiplicity} for a way to do this using Gr\"obner bases). Therefore, the bound in proposition \ref{lem:bound_sigma} states that
$$\Theta_1(X) \le 3.$$
In fact, the intersection of $X$ with a generic plane has exactly $3$ points in it, therefore the density of $X$ is equal to $3$, and our bound was optimal.

\section*{Acknowledgements}
I thank Prof. G. Comte, my former PhD co-adviser at the University of Nice, for suggesting to me that a bound could be derived on local Betti numbers of real germs with interesting applications to well-known invariants. His help was instrumental not only in guiding me through existing mathematical knowledge during my PhD, but also in providing much needed feedback as I was writing this article.

As most of the work presented in this article was done during my PhD, I must also thank Dr. B. Mourrain, my PhD adviser at the University of Nice, and Prof. E. Bierstone, who supervised me for a year at the University of Toronto, who both contributed to the shaping of the mathematical skills I relied on in the present work.

\bibliographystyle{plain}
\bibliography{germ_bound}
\end{document}